\newtheorem{theorem}{Theorem}[section]
\newtheorem{lemma}[theorem]{Lemma}
\newtheorem{proposition}{Proposition}
\theoremstyle{definition}
\newtheorem{remark}{Remark}
\newtheorem{hypothesis}[theorem]{Hypothesis}
\newenvironment{system}
{\left\lbrace\begin{array}{@{}l@{}}}
{\end{array}\right.}
\DeclarePairedDelimiter{\abs}{\lvert}{\rvert}
\def\ds{\begin{displaystyle}}
\def\eds{\end{displaystyle}}
\def\dis{\displaystyle }
\def\<{\langle }
\def\>{\rangle }
\newcommand{\R}{\mathbb{R}}
\newcommand{\E}{\mathbb{E}}
\newcommand{\mP}{\mathbb{P}}
\newcommand{\Fcal}{\mathcal{F}}      
\newcommand{\Lcal}{\mathcal{L}}
\newcommand{\calb}{\mathcal{B}}
\def\R{\mathbb R}     
\def\E{\mathbb E}
\def\P{\mathbb P}
\def\Q{\mathbb Q}
\title[On the stochastic maximum principle with delay]
      {Stochastic maximum principle for problems with delay with dependence on the past through general measures}
\author[Giuseppina Guatteri and Federica Masiero]{}
\subjclass{ 60H10, 93E20.}
 \keywords{Stochastic maximum principle, delay, anticipated backward stochastic differential equations.}
 \email{giuseppina.guatteri@polimi.it}
 \email{federica.masiero@unimib.it}
\begin{document}
\maketitle

\centerline{\scshape Giuseppina Guatteri}
\medskip
{\footnotesize
 \centerline{Dipartimento di Matematica, Politecnico di Milano}
   \centerline{  via Bonardi 9, 20133 Milano, Italia}}

\medskip

\centerline{\scshape Federica Masiero}
\medskip
{\footnotesize
 \centerline{ Dipartimento di Matematica e Applicazioni, Universit\`a di Milano-Bicocca.}
   \centerline{via Cozzi 55, 20125 Milano, Italia}}
%

\begin{abstract}
We  prove a stochastic maximum principle for a control problem where the state equation is delayed both in the state and in the control, and both the running and the final cost functionals may depend on the past trajectories. The adjoint equation turns out to be a new form of linear anticipated backward stochastic differential equations (ABSDEs in the following), and we prove a direct formula to solve these equations.
\end{abstract}

\maketitle

\section{Introduction}

In this paper we consider a controlled state equation for the process $x$ with delay both in the state and in the control $u$, namely $x$ is the solution to the following stochastic delay controlled equation in $\R^n$ driven by an $m$-dimensional Brownian motion $W$:
\begin{equation}\label{eq:state-intro}
\begin{system}
dx(t)=\textcolor{blue}{f}(t,x_t,u_t)dt +\textcolor{blue}{g}(t,x_t,u_t)dW(t), \\
x(\theta)= x_0(\theta), \quad u(\theta)=\eta(\theta),\qquad \theta \in [-d, 0].
\end{system}
\end{equation}
Here and throughout the paper we use the notation  $x_t(\theta) = x(t+\theta),\, u_t(\theta) = u(t+\theta),$ with $\theta \in [-d,0]$ to denote the past trajectory of $x$ and $u$ from $t-d$ up to time $t$.  We consider admissible controls $u$, that are progressively measurable and square integrable processes taking values in a convex set  $U\subset \R^k$: in this case the stochastic maximum principle can be formulated in terms of the first order adjoint equation.
\newline We are able to allow a quite general dependence on the past trajectories $x_t$ and $u_t$ of the state and of the control, namely the drift  and diffusion can be written as 
\begin{align}\label{barf-barg-intro}
 f(t,x,u)=\bar f (t, \int_{-d}^0 x(\theta )\mu_1(d\theta),  \int_{-d}^0 u(\theta )\mu_3(d\theta)), \\ \nonumber
 \quad g(t,x,u)=\bar g (t, \int_{-d}^0 x(\theta )\mu_2(d\theta),  \int_{-d}^0 u(\theta )\mu_4(d\theta)),
 \end{align}
 where $\mu_1,\mu_2,\mu_3,\mu_4$ are finite regular measures on $[-d,0]$, and $\bar f(t,\cdot,\cdot)$ and  $\bar g(t,\cdot,\cdot)$ are Lipschitz continuous and differentiable.
Associated to equation (\ref{eq:state-intro}) we consider the cost functional 
\begin{equation}\label{costo:fin-intro}
J(u(\cdot)) = \E \int_0^T l(t,x_t, u_t) dt + \E\, h(x_T) 
\end{equation}
that we have to minimize over all admissible controls. Also in the running cost $l$ and in the final cost $h$ we can allow a general dependence on the past trajectories $x_t$ and $x_T$: there exist finite regular measures $\mu_5,\,\mu_6,\,\mu$ such that the current cost and the final cost can be written as follows
\begin{align}\label{barl-barh-intro}
l(t,x, u)=\bar l (t,\int_{-d}^0 x(\theta )\mu_5(d\theta),\int_{-d}^0u(\theta )\mu_6(d\theta)),\quad \\ \nonumber
 h(x)=\bar h (\int_{-d}^0 x(\theta )\mu(d\theta)), \; \forall \,x \in \, C([-d,0], \R^n),
\end{align}
Such kind of dependence is rather general even though some relevant cases cannot be covered, like the dependence on the supremum of the history of the path,  for example $ h(x)=h( \sup_{\theta \in [-d,0]}x(\theta))$.

We choose to attach our problem by means of the stochastic maximum principle since the dynamic programming approach stochastic optimal control problems governed by delay equations with delay in the control are usually harder to study than the ones when the delay appears only in the state.  The main difficulty is that the associated Hamilton Jacobi Bellman equation is an infinite dimensional second order semilinear PDE, which is not trivial to solve, see e.g. \cite{GozMar,GozMarSav,GozMas}. Indeed the delay in the control cannot be directly treated by means of the dynamic programming principle, and in order to remove such delay in the control the problem must be turned into an infinite dimensional stochastic control problem that, unlike the infinite dimensional formulation of problem with delay in the state, does not satisfy the so called structure condition according to which the control affect the system as a perturbation of the noise, and moreover in many situations, including the case of pointwise delay, the control operator is unbounded. More general cases can be treated by applying the so called randomization method, see e.g. \cite{BanCosFuhPha}: with this approach it is possible to characterize the value function but no conditions on the optimal control can be given.

\noindent On the contrary, studying a stochastic optimal control problem by means of the stochastic maximum principle allows to get conditions on the optimal control. 

When studying our control problem  by means of the stochastic maximum principle the adjoint equation turns out to be the following ABSDE for the pair of processes $(p,q)\in
\Lcal^2_{\mathcal{F}}(\Omega\times [0,T], \R^{n})\times 
\Lcal^2_{\mathcal{F}}(\Omega\times [0,T], \R^{n\times  m})$, 
 \begin{equation}\label{ABSDEtrascinata-aggiunta-intro}
  \left\lbrace\begin{array}{l}
p(t)=\dis\int_t^T \E^{\Fcal_s}\dis\int_{-d}^0
\bar l_x\left(s-\theta, x(s-\theta),u(s-\theta)\right)
\mu_5(d\theta)\,ds\\
\qquad+\dis\int_t^T\E^{\Fcal_s}\dis\int_{-d}^0 p(s-\theta)
\bar f_x\left(s-\theta, x(s-\theta),u(s-\theta)\right)\mu_1(d\theta)\, ds\\
\qquad+\dis\int_t^T
\E^{\Fcal_s}\dis\int_{-d}^0q(s-\theta)\bar g_x\left(s-\theta, x(s-\theta),u(s-\theta)\right)
\mu_2(d\theta)\, ds \\
\qquad+\dis\int_t^Tq(s)dW_s+\dis\int_{t\vee(T-d)}^T\textcolor{blue}{\E^{\Fcal_s}}\ \bar  h_x(x_T)\textcolor{blue}{\mu^T(ds)},\\
\quad\qquad p(T-\theta)=0,\quad q(T-\theta)=0, \quad \forall \,\theta \in [-d,0 ),
 \end{array}
 \right.
 \end{equation}
 \textcolor{blue}{where by $\mu^T$ we have denote the measure obtained by translating $\mu$, namely for any $A\in\calb(\R)$, we have $ \mu^T(A)=\mu(A-T)$ where $A-T:=\left\lbrace x-T:x\in A \right\rbrace$}.
\newline Notice that equation \eqref{ABSDEtrascinata-aggiunta-intro} does not make sense in differential form: the term 
 $$\frac{d}{dt}\dis\int_{t\vee(T-d)}^T\textcolor{blue}{\E^{\Fcal_s}}\bar  h_x(x_T)\textcolor{blue}{\mu^T(ds)}$$ 
  is well defined when $\mu$ is absolutely continuous with respect to the Lebesgue measure. In order to be able to work with differentials, we will consider an ABSDE where $\mu$ is approximated by a sequence of finite regular measures $(\mu^n)_{n\geq 1}$ absolutely continuous with respect to the Lebesgue measure on $[T-d,T]$, so that the differential
\[
\frac{d}{dt}\int_{t\vee(T-d)}^T\textcolor{blue}{\E^{\Fcal_s}}\bar  h_x(x_T)\textcolor{blue}{\mu^{n,T}(ds)}
\] 
 makes sense (\textcolor{blue}{here $ \mu^{n,T}(A)=\mu^n(A-T)$}). In this way, for the approximating ABSDE the differential form makes sense. 
\newline The ABSDE \eqref{ABSDEtrascinata-aggiunta-intro} is a new type of linear ABSDEs, already considered in \cite{GMO}: in the present paper for  the solution of \eqref{ABSDEtrascinata-aggiunta-intro} we are able to give a representation which is the analogous of the one for linear BSDEs.

With these tools in hands, we are able to state necessary conditions for the optimality in terms of the pair of processes $(p,q)$: let $(\bar x, \bar u)$ be an optimal pair and  let $u^\rho= \bar u+\rho \bar v$, where $\bar v$ is another admissible control, then, $\mathbb{P}$- a.s. and for a.e. $t \in [0,T]$:
\begin{multline}\label{eq:opt-contr-intro}
\E^{\mathcal{F}_t}\!\!\!\int_{-d}^0\!\!\!\left(\bar{f}(t-\theta, \bar{x}(t-\theta), u^\rho(t-\theta))\!-\!\!\bar{f}(t-\theta,\bar x(t-\theta),\bar u(t-\theta))\right)p(t-\theta)\mu_3(d\theta) \\
+\E^{\mathcal{F}_t}\int_{-d}^0\textcolor{blue}{\left(l(t-\theta, \bar{x}(t-\theta), u^\rho(t-\theta))- l(t-\theta, \bar x(t-\theta),\bar u(t-\theta))\right)}\,\mu_6(d\theta) +\\
 \E^{\mathcal{F}_t} \int_{-d}^0\!\!\left(\bar{g}(t-\theta, \bar x{(t-\theta)}, u^\rho(t-\theta))\!-\!\bar{g}(t-\theta,\bar x(t-\theta),\bar u(t-\theta))\right)q(t-\theta)\,\mu_4(d\theta) \geq 0.
\end{multline}
Notice that this formula can be rewritten in a differential way under stronger assumptions on the coefficients, see Section \ref{Sec:contr}, formula \eqref{max-princ:fin:condiz-ham-diffle}.

The results achieved by means of the stochastic maximum principle can be applied to a stochastic optimal control problem arising in advertisement models with delay and to an optimal portfolio problem with execution delay, we refer to Sections \ref{sec:opt-adv} and \ref{sec:opt-port} for details.

After the introduction of anticipated backward stochastic differential equations (ABSDEs) in the paper \cite{PengYang}, the stochastic maximum principle for delay equations has been widely studied in the literature.
We mention, among others, \cite{ChenWuAutomatica2010}, where a problem with pointwise delay in the state and in the control is studied, \cite{oksendal2011optimal}, where a controlled state equation driven by a Brownian motion and by a Poisson random measure is taken into account, and the delay affects the system by means of terms with a more restrictive structure that the one considered in \eqref{barf-barg-intro}, indeed, the measures $\mu_j,\, j=1,...,4$ all reduce to the same measure, absolutely continuous with respect to the Lebesgue measure, and with exponential density.

In the present paper we study the stochastic maximum principle for stochastic control problems where the state equation may present delay in the state and in the control and where in the associated cost functional we allow dependence on the past trajectory also in the final cost.  Following the standard steps in the variational approach for control problems, we formulate the maximum principle by means of an adjoint equation. {The novelty is that the adjoint equation} turns out to be an ABSDE of a more general form than the ones introduced in \cite{PengYang} and generalized in \cite{ElliottYang}. This is due to the fact that we allow dependence on the past trajectory also in the final cost. It turns out that the adjoint equation is not regular enough  to  be an It\^o process and so to prove the stochastic maximum principle we must introduce some suitable regularized  approximating problems.
\newline The dependence on the past trajectory in the final cost  has been studied also in \cite{GMO}, for an infinite dimensional evolution equation with delay only in the state and no control dependent noise. The adjoint ABSDE considered in \cite{GMO} is similar to the one we handle here; in the present paper the ABSDE is solved directly by an extension of the formulas for linear BSDEs.  

Concerning the recent literature based on ABSDEs, we are able to consider more general dependence on the past trajectory and moreover we can study the case when the final cost depends on the past trajectory of the state. As far as we know, such a general case is studied only in \cite{hu1996maximum}, with a direct functional analytic method, and the authors do not take into account the delay in the control.

The paper is organized as follows: in Section \ref{sec:newABSDE} we study the new form of linear ABSDEs, in Section 3 we present the control problem and after studying the variation of the state with respect to the variation of the control we formulate and prove the stochastic maximum principle, in Sections \ref{sec:opt-adv} and \ref{sec:opt-port} the results are applied respectively to a stochastic dynamic model in marketing for problems of optimal advertising and to an optimal portfolio problem with execution delay.

\subsection{Notations}

Let $(\Omega, \Fcal, \mP)$ be a complete probability space, $ W(t)$ an $m$-dimensional Brownian motion and let $(\mathcal{F}_t)_{t\geq 0}$ be the natural filtration associated to $W$, augmented in the usual way with the family of $\mP$-null sets of $\mathcal{F}$.
\newline For any $p\in [1,\infty]$ and $T>0$ we define
\begin{itemize}
\item $\Lcal^p_{\mathcal{F}}(\Omega\times [0,T]; \R^k)$, the set of all $(\mathcal{F}_t)$-progressive processes with values in $\R^k$ such that the norm
$$
\begin{array}{lll}||Y ||^p_{\Lcal_{\mathcal{F}}^p(\Omega\times[0,T];\R^k)} = \displaystyle \left( \E \int_0^T \abs{Y(t)}_{\R^k}^p dt\right)^{1/p}
& \mbox{if}& p<\infty,
\\
||Y ||_{\Lcal_{\mathcal{F}}^\infty(\Omega\times[0,T];\R^k)} = \mathop{ess\;sup}_{\omega\in\Omega,t\in [0,T ]}
 |Y_t(\omega)|&
  \mbox{if}& p=\infty
  \end{array}
$$
is finite. Here we take the $ \mathop{ess\;sup}$ with respect to $dt \otimes d\P$.
\item $\Lcal^p_{\mathcal{F}}(\Omega;C( [0,T]; \R^k))$, the set of all  $(\mathcal{F}_t)$-progressive and continuous processes with values in $\R^k$ such that the norm
$$
\begin{array}{lll}||Y ||^p_{\Lcal^p_{\mathcal{F}}(\Omega;C([0, T];\R^k))} = \E \sup_{t\in [0,T ]} |Y_t|^p
& \mbox{if}& p<\infty,
\\
||Y ||_{\Lcal^\infty_{\mathcal{F}}(\Omega;C([0, T];\R^k))} = \mathop{ess\;sup}_{\omega\in\Omega}
\sup_{t\in [0,T ]} |Y_t(\omega)|&
  \mbox{if}& p=\infty
  \end{array}
$$
is finite.
Elements of this space are identified up to indistinguishability. We will denote the space as $\mathcal{S}^p_{\mathcal{F}}([0,T])$.

 \item $\Lcal^p_{\mathcal{F}}(\Omega;B([0, T];\R^k))$, the set of  $(\mathcal{F}_t)$-progressive measurable and a.s bounded trajectories processes with values in $\R^k$  such that the norm
$$
\begin{array}{lll}||Y ||^p_{\Lcal^p_{\mathcal{F}}(\Omega;B([0, T];\R^k))} = \E \sup_{t\in [0,T ]} |Y_t|^p
& \mbox{if}& p<\infty,
\\
||Y ||_{\Lcal^\infty_{\mathcal{F}}(\Omega;B([0, T];\R^k))} = \mathop{ess\;sup}_{\omega\in\Omega}
\sup_{t\in [0,T ]} |Y_t(\omega)|&
  \mbox{if}& p=\infty
  \end{array}
$$
is finite.
Elements of this space are identified up to indistinguishability.
 We will denote the space as $\mathcal{B}^p_{\mathcal{F}}([0,T])$
\end{itemize}

Throughout the paper given a progressive measurable process $y\in \Lcal^1_{\mathcal{F}}(\Omega\times[0,T],\R^k)$, for $0\leq s\leq t$ by $\E^{\Fcal_s} y(t)$ we denote the optional projection of $y$ into $\Fcal_s$.

\section{A new form of anticipated backward stochastic differential equations}
\label{sec:newABSDE}

In this section we study ABSDEs which have the suitable form to be the adjoint equations in problems with delay we treat in the present paper.
We will consider a stochastic differential equation of backward type, and on its coefficients we make the following assumptions.
\begin{hypothesis}\label{hyp:BSDEtrascinata}
Let
$f\in\Lcal^2_{\mathcal{F}}(\Omega\times [0,T], \R^{n})$, 
$g\in\Lcal^\infty_{\mathcal{F}}(\Omega\times [0,T], \R^{n\times  n})$,
$h\in\Lcal^\infty_{\mathcal{F}}(\Omega\times [0,T], \R^{m})$
and $\xi\in\mathcal{B}^2_{\mathcal{F}}([T-d,T])$ where $0<d\leq T$ is fixed.  Let $\mu$ be a finite  regular measure on $[T-d,T]$ and denote by $|\mu|$ its total variation, see \cite{eidelman2004functional}. 
\end{hypothesis}
Here we refer to the definition of regular measure, given in \cite{eidelman2004functional}, page 156, according to which given a closed interval $I$ a non negative measure $\mu$ defined on the $\sigma$-algebra $\mathcal{B}(I)$ of all the Borel sets of $I$ is called \textit{ regular} if $\forall\, A\in \mathcal{B}(I)$
\[
\mu(A)=\sup\{ \mu(F):F\subseteq A,\, F \text{ is closed}\}\]
\[
\mu(A)=\inf\{ \mu(G):F\supseteq A,\, G \text{ is open}\}
\] 
To be more precise in \cite{eidelman2004functional} such a measure is called strongly regular, but we follow most of the literature where  such a measure is called \textit{regular measure}.
\newline If $h_1,\,h_2 \in \R^m$, we denote for brevity
with $h_1h_2$ the scalar product $\langle h_1,h_2\rangle_{\R^m}$. 
Let us consider the following linear BSDE
\begin{align}\label{BSDEtrascinata}
p(t)&=\int_t^Tf(s)ds+\int_t^Tg(s)p(s)ds +\int_t^Tq(s)h(s)ds +\int_t^Tq(s)dW(s)  \\ \nonumber &+\int ^T_{t\vee(T-d)}\xi(s)\mu(ds).
\end{align}
Let $\Q$ be the probability measure, equivalent to the original one $\P$, such that
$$
\tilde W(t):= \int_0^th(s)\,ds+W(t)
$$
is a $\Q$-Wiener process.
Notice that $\frac{d \mathbb{Q}}{ d \mathbb{P}}= \mathcal{E}(\int_0^T h(s) \, d B_s)$ thus, thanks to hypothesis \ref{hyp:BSDEtrascinata}  the two measures are equivalent, and in particular for any $ \Theta \in L^1(\Omega, \mathbb{P})$
 \begin{equation}\label{stimaRadonNyk}\E_{\mathbb{Q}} |\Theta| \leq C \E|\Theta| \qquad \text{ and }\qquad\E |\Theta| \leq \tilde{C} \E_{\mathbb{Q} }|\Theta|\end{equation} for some $C, \tilde{C}$ depending only on $T$ and the process $h$.  
\newline We have the following formula for the unique solution of the linear BSDE (\ref{BSDEtrascinata}), 
which is the counterpart for the classical formula for the solution of a linear BSDE. 
We also notice that equation (\ref{BSDEtrascinata})
is a linear BSDE with a final datum $\xi$ acting not only at the final time $T$, but on the
whole interval $[T-d,T]$.
\begin{lemma}\label{lemma:BSDEtrascinata} Assume Hypothesis \ref{hyp:BSDEtrascinata} holds true, then the BSDE (\ref{BSDEtrascinata}) admits a unique adapted solution, that is a pair
of processes $(p,q)\in
\Lcal^2_{\mathcal{F}}(\Omega\times [0,T], \R^{n})\times 
\Lcal^2_{\mathcal{F}}(\Omega\times [0,T], \R^{n\times  m})$, satisfying the integral equation (\ref{BSDEtrascinata}).
The process $p $ is given by the formula
\begin{equation}\label{formula:BSDEtrascinata}
 p(t)=\E_\Q^{\Fcal_t}\left[ \int_{t\vee(T-d)}^Te^{\int_t^s g(u)du}\xi(s) \mu(ds)
 +\int_{t}^Te^{\int_t^s g(u)du}f(s)ds\right].
\end{equation}
\end{lemma}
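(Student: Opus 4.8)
The plan is to strip the equation of its two linear terms $q\,h$ and $g\,p$ in turn, the first via the change of measure already introduced and the second via an integrating factor, and then to recognise $p$ as a conditional expectation, which will be precisely formula (\ref{formula:BSDEtrascinata}). Because $h\in\Lcal^\infty_{\mathcal{P}}$ is bounded, Novikov's condition holds, the stochastic exponential of $-\int_0^\cdot h(s)\,dW(s)$ is a true martingale, and $\Q$ is well defined, equivalent to $\P$, with density in $L^r(\P)$ for every $r<\infty$; by Girsanov $\tilde W(t)=W(t)+\int_0^th(s)\,ds$ is a $\Q$-Brownian motion. Since $\int_t^Tq(s)h(s)\,ds+\int_t^Tq(s)\,dW(s)=\int_t^Tq(s)\,d\tilde W(s)$, the BSDE (\ref{BSDEtrascinata}) reads under $\Q$ as
\[
p(t)=\int_t^T\big(f(s)+g(s)p(s)\big)\,ds+\int_t^Tq(s)\,d\tilde W(s)+\int_{t\vee(T-d)}^T\xi(s)\,\mu(ds).
\]

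To remove $g\,p$ I would introduce the $\R^{n\times n}$-valued, adapted and continuous fundamental solution $\Psi$ of $d\Psi(t)=\Psi(t)g(t)\,dt$, $\Psi(0)=I$. As $g$ is bounded, $\Psi$ and $\Psi^{-1}$ are bounded, and the resolvent $\Psi(t)^{-1}\Psi(s)$ is the factor written $e^{\int_t^sg(u)\,du}$ in the statement (literally so when $g$ commutes with its integral, and to be read as the definition of that symbol otherwise). Applying It\^o's formula to $\Psi(t)p(t)$, in which $\Psi$ has bounded variation so that no bracket term appears, the drift $\Psi g p$ produced by $d\Psi\cdot p$ cancels the one coming from $\Psi\cdot dp$, leaving the $g$-free relation
\[
d\big(\Psi(t)p(t)\big)=-\Psi(t)f(t)\,dt-\Psi(t)q(t)\,d\tilde W(t)-\Psi(t)\xi(t)\,\mu(dt),
\]
to be used only in integrated form, since, as observed after (\ref{ABSDEtrascinata-aggiunta-intro}), the $\mu$-term carries no differential meaning.

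Integrating this identity over $[t,T]$ and taking $\E_\Q^{\Fcal_t}$, the $d\tilde W$-integral is a $\Q$-(local) martingale whose conditional expectation vanishes, so that any solution satisfies $\Psi(t)p(t)=\E_\Q^{\Fcal_t}\big[\int_t^T\Psi(s)f(s)\,ds+\int_{t\vee(T-d)}^T\Psi(s)\xi(s)\,\mu(ds)\big]$; multiplying by the $\Fcal_t$-measurable factor $\Psi(t)^{-1}$ and bringing it inside the conditional expectation turns this into (\ref{formula:BSDEtrascinata}), which already pins down $p$. For existence I would run this backwards: take the right-hand side as the definition of $p$, note that boundedness of $\Psi$ together with $f\in\Lcal^2_{\mathcal{P}}$, $\xi\in\mathcal{B}^2_{\mathcal{P}}$, $|\mu|<\infty$ and integrability of $d\Q/d\P$ give $p\in\Lcal^2_{\mathcal{P}}$, observe that $\int_0^T\Psi f\,ds+\int_{T-d}^T\Psi\xi\,\mu(ds)$ is a fixed square-integrable $\Fcal_T$-random variable whose conditional expectations form a $\Q$-martingale, and apply the martingale representation theorem relative to $\tilde W$ to obtain a process $\zeta$. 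Matching $\zeta$ against the integrated relation defines $q:=-\Psi^{-1}\zeta\in\Lcal^2_{\mathcal{P}}$, and undoing the two transformations (rewriting $\int q\,d\tilde W$ in terms of $W$ and $h$, and multiplying through by $\Psi^{-1}$) verifies that $(p,q)$ solves (\ref{BSDEtrascinata}).

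Uniqueness is the cheap part: the difference of two solutions has $f=0$ and $\xi=0$, hence solves the homogeneous linear BSDE $\delta p(t)=\int_t^Tg\,\delta p\,ds+\int_t^T\delta q\,h\,ds+\int_t^T\delta q\,dW$ with zero terminal datum, and the standard It\^o/Gronwall a priori estimate forces $\delta p\equiv0$ and $\delta q\equiv0$. I expect the genuine difficulty to sit in the measure term: one must keep the entire argument in integral form, handle the possible atoms of $\mu$ (notably at $T$ and at $T-d$) when splitting $\int_{t\vee(T-d)}^T\Psi\xi\,\mu(ds)$ into a fixed terminal variable minus an $\Fcal_t$-adapted running part, and make sure the martingale representation is applied to an honestly $\Fcal_T$-measurable, $\Q$-square-integrable random variable. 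The accompanying passage between $\P$- and $\Q$-norms (and the upgrade of the relevant stochastic integrals from local to true martingales) is routine, being guaranteed by the boundedness of $h$.
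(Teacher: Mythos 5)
Your proof is correct and reaches the paper's formula (\ref{formula:BSDEtrascinata}) by the same opening step (Girsanov with bounded $h$ to absorb the $qh$ term into $\tilde W$) and the same closing steps (martingale representation to produce $q$, uniqueness via the cancellation of the $\mu$-term in the difference of two solutions). Where you diverge is in the treatment of the $gp$ term: the paper uses no integrating factor, but instead verifies the formula by direct substitution into the $\Q$-equation on $[T-d,T]$ (via a change in the order of integration), and then glues on a standard linear BSDE on $[0,T-d]$ with terminal datum $\bar p(T-d)$. Your It\^o-product argument with the fundamental solution $\Psi$ buys two things: it treats the whole interval $[0,T]$ at once with no gluing at $T-d$, and it proves \emph{necessity} of the formula (any solution must be given by it), whereas the paper only checks sufficiency and defers uniqueness to a final remark. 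You are also more careful than the paper on two points it silently glosses over: the symbol $e^{\int_t^s g(u)du}$ is literal only when the matrices commute and must otherwise be read as the resolvent $\Psi(t)^{-1}\Psi(s)$, and the possible atoms of $\mu$ at $T$ and $T-d$ force the product rule to be used only in integrated form. The one place where you sit at the same slightly informal level as the paper is the integrability bookkeeping across the change of measure: $L^2(\P)$ data together with a density lying in every $L^r(\P)$ give a priori only $L^{2-\varepsilon}(\Q)$-integrability, so the application of the $\Q$-martingale representation theorem and the return to $\P$-estimates deserve a word --- but the paper does not address this either, so it is not a gap relative to the benchmark.
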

\begin{proof} 
Under the probability measure $\Q$ equation (\ref{BSDEtrascinata}) can be rewritten as
\begin{equation}\label{BSDEtrascinataQ}
p(t)=\int_t^Tf(s)ds+\int_t^Tg(s)p(s)ds +\int_t^Tq(s)d\tilde W(s) +\int_{t\vee(T-d)}^T\xi(s) \mu(ds).
\end{equation}
Let us first prove that the process $p$ given by formula (\ref{formula:BSDEtrascinata}) verifies equation (\ref{BSDEtrascinata})
for $t\in[T-d,T]$. For $t\in[T-d,T]$ 
equation (\ref{BSDEtrascinataQ}) implies  
\begin{equation}\label{BSDEtrascinataQ-special}
p(t)=\E_\Q^{\Fcal_t}\left[\int_t^Tf(s)ds+\int_t^Tg(s)p(s)ds  +\int_{t}^T\xi(s) \mu(ds)\right].
\end{equation}
Using formula (\ref{formula:BSDEtrascinata}) we define
\begin{equation}\label{formula:BSDEtrascinataQ-special}
 \bar{p}(t): =\E_\Q^{\Fcal_t}\left[ \int_{t}^Te^{\int_t^s g(u)du}\xi(s)\mu(ds)
 +\int_{t}^Te^{\int_t^s g(u)du}f(s)ds \right], \quad t \in [T-d,T].
\end{equation}
It is immediate to see that $\bar p$ defined in formula (\ref{formula:BSDEtrascinataQ-special}) satisfies (\ref{BSDEtrascinataQ-special}), indeed putting formula (\ref{formula:BSDEtrascinataQ-special}) for $\bar p$ in equation (\ref{BSDEtrascinataQ-special}) we get
\begin{align}\label{formula:BSDEtrascinataQ-special1}
&\E_\Q^{\Fcal_t}\left[ \int_{t}^Te^{\int_t^s g(u)du}\xi(s)\mu(ds)
 +\int_{t}^Te^{\int_t^s g(u)du}f(s)ds\right]\\ \nonumber
 &=\E_\Q^{\Fcal_t}\left[\int_t^Tf(s)ds
 +\int_{t}^T\xi(s) \mu(ds)  \right]\\  \nonumber & +\int_t^Tg(s) \E_\Q^{\Fcal_s}\left[ \int_{s}^Te^{\int_s^r g(u)du}\xi(r) \mu(dr)
 +\int_{s}^Te^{\int_s^r g(u)du}f(r)dr ds \right] \\ \nonumber
  &= \E_\Q^{\Fcal_t}\left[\int_t^Tf(s)ds
  +\int_{t}^T\xi(s) \mu(ds) \right]\\ \nonumber & +\int_t^Tg(s)\left[ \int_{s}^Te^{\int_s^r g(u)du}\xi(r) \mu(dr)
  +\int_{s}^Te^{\int_s^r g(u)du}f(r)dr\right]ds. \\ \nonumber
\end{align}
Changing the order of integration 
it is immediate to see that $$\left(\E_\Q^{\Fcal_t}\left[ \int_{t}^Te^{\int_t^s g(u)du}\xi(s) \mu(ds)+\int_{t}^Te^{\int_t^s g(u)du}f(s)ds\right]\right)_{t\in[0,T]}$$ satisfies the integral equation (\ref{BSDEtrascinataQ-special}) that corresponds to equation \eqref{BSDEtrascinataQ}, by the usual application of the Martingale Representation Theorem.

Now let us consider the following equation, under $\mathbb{Q}$ again, for $t \in [0,T-d]$: 
\begin{equation}\label{BSDEtrascinataQT-d}
p(t)=\int_t^{T-d}f(s)ds+\int_t^{T-d}g(s)p(s)ds +\int_t^{T-d}q(s)d\tilde W(s)+ \bar{p}(T-d),
\end{equation}
that is a standard BSDE with final datum $\bar{p}(T-d)$ defined in \eqref{formula:BSDEtrascinataQ-special}. The unique solution of \eqref{BSDEtrascinataQT-d} is given by:
\begin{align}\label{formula:BSDEtrascinataQ-special2}
& \tilde{p}(t)=\E_\Q^{\Fcal_t}\left[ e^{\int_t^{T-d} g(u)du}\bar{p}(T-d)
 +\int_{t}^{T-d}e^{\int_t^s g(u)du}f(s)ds\right]\\ \nonumber
 &=\E_\Q^{\Fcal_t}\left[ e^{\int_t^{T-d} g(u)du}\E_\Q^{\Fcal_{T-d}}\left[ 
 \int_{T-d}^Te^{\int_{T-d}^s g(u)du}\xi(s)\mu(ds)\right.\right.\\ \nonumber
 &\left.\left.+\int_{T-d}^Te^{\int_{T-d}^s g(u)du}f(s)ds\right]+\int_{t}^{T-d}e^{\int_t^s g(u)du}f(s)ds\right],
\end{align}
Hence $p(t)$ defined by \eqref{formula:BSDEtrascinata}, can also be written as
\begin{equation}\label{soluzioneBSDEtrascinataQ}
p(t) := \left\{ \begin{array}{ll} \bar{p} (t) & t \in [T-d,T], \\ \\
\tilde{p}(t) & t \in [0,T-d)
\end{array}
\right.
\end{equation}
So by construction $p$ together with its corresponding martingale term $q$, that can be uniquely determined through the variation of  the process $p$ with the noise $W$, is a solution to \eqref{BSDEtrascinataQ}, and hence  a weak solution to \eqref{BSDEtrascinata} in the whole time interval $[0,T]$ .
\newline In particular, directly from \eqref{formula:BSDEtrascinata} we get that, see also \cite[Chapter 5]{ParRasbook},  for any $\beta>0$, taking also into account \eqref{stimaRadonNyk} and the Burkholder-Davis-Gundy inequalities:
\begin{align}\label{normaS:BSDEtrascinata}
&\E \sup_{t\in [0,T]}  e^{\beta t}|p(t)|^2   \leq C  \, \E_{\mathbb{Q}} \sup_{t\in [0,T]}  e^{\beta t}|p(t)|^2   \\
&   \leq 
C\, e^{\beta T} \E_{\mathbb{Q}} \left[   \sup_{t\in [0,T]}  \E_{\mathbb{Q}}^{\mathcal{F}_t} \Big| \int_{t}^Te^{\int_t^s g(u)du}\xi(s) \mu(ds)\Big|^2 \right.\\
&\left.\;+ \sup_{t\in [0,T]} 
\E_{\mathbb{Q}}^{\mathcal{F}_t} \Big|\int_{t}^Te^{\int_t^s g(u)du}f(s)ds\Big|^2 \right]
\nonumber\\ &
 \leq 
{C} \, e^{\beta T} \E_{\mathbb{Q}} \left[   \sup_{t\in [0,T]}  \E_{\mathbb{Q}}^{\mathcal{F}_t} \Big[ \int_{T-d}^T|\xi(s)| |\mu|(ds)\Big]^2 +   \frac{1}{\beta} 
 \sup_{t\in [0,T]}\E_{\mathbb{Q}}^{\mathcal{F}_t} \int_0^T  e^{\beta s } |f(s)|^2 \, ds \right] \nonumber
 \\&
\leq
C \, \E_{\mathbb{Q}} \left[ e^{\beta T}\sup_{t\in [0,T]}  \E _{\mathbb{Q}}^{\mathcal{F}_t}   \left( \int_{T-d}^T |\xi(s)|| \mu|(ds) \right)^2
+   \frac{1}{\beta}  \sup_{t\in [0,T]}   \E_{\mathbb{Q}}^{\mathcal{F}_t} \int_0^T  e^{\beta s } |f(s)|  ^2\, ds   \right]\nonumber 
\\& \leq 
C \, \left[ e^{\beta T} \left(\E _{\mathbb{Q}}  \int_{T-d}^T |\xi(s)| |\mu|(ds) \right)^2
+   \frac{1}{\beta}   \E _{\mathbb{Q}} \int_0^T  e^{\beta s }|f(s)|  ^2\, ds   \right] 
 \nonumber\\& \leq 
C \, \left[ e^{\beta T} \left(\E  \int_{T-d}^T |\xi(s)| |\mu|(ds) \right)^2
+   \frac{1}{\beta}   \E \int_0^T  e^{\beta s }|f(s)|  ^2\, ds   \right] \nonumber
\end{align}
where the constant $C$ may change from line to line but only depends on $T$ and $||g||_{\Lcal_{\mathcal{F}}^\infty(\Omega\times[0,T])}$, $||h||_{\Lcal_{\mathcal{F}}^\infty(\Omega\times[0,T])}$ and thus 
\begin{align}\label{norma:BSDEtrascinata1}
&\E \sup_{t\in [0,T]}  e^{\beta t}|p(t)|^2  \\ &\leq     C \left[ e^{\beta T}\left(|\mu|([T-d,T])\right)^2 \E \sup_{t\in [T-d,T]}|\xi(t)| ^2 +
 \frac{1}{\beta}    \E \int_0^T  e^{\beta s }|f(s)|  ^2\, ds  \right] \nonumber.
\end{align}
By the latter calculations together with standard considerations see for instance \cite{FuTessAOP2002}, using the Martingale Representation Theorem, we get for all $\beta >0$, 
\begin{align}\label{norma:BSDEtrascinata}
& \E \sup_{t\in [0,T]}  e^{\beta t}|p(t)|^2  +  \E \int_0^T e^{\beta s}\vert q(s)\vert^2\,ds  \\ 
&\leq c\left[ \E _{\mathbb{Q}}\sup_{t\in [0,T]}  e^{\beta t}|p(t)|^2  +  \E _{\mathbb{Q}}\int_0^T e^{\beta s}\vert q(s)\vert^2\,ds\right]\nonumber  \\ &\leq  c \left[ e^{\beta T} \left(\E  _{\mathbb{Q}}\int_{T-d}^T |\xi(s)| |\mu|(ds) \right)^2
+   \frac{1}{\beta}   \E  _{\mathbb{Q}} \int_0^T  e^{\beta s }|f(s)|  ^2\, ds   \right], \nonumber
 \\ &\leq  c \left[ e^{\beta T} \left(\E   \int_{T-d}^T |\xi(s)| |\mu|(ds) \right)^2
+   \frac{1}{\beta}   \E \int_0^T  e^{\beta s }|f(s)|  ^2\, ds   \right], \nonumber
     \end{align}
 and these estimates imply
\begin{align}\label{norma:BSDEtrascinata2}
&\E\int_0^T \frac{\beta}{2} e^{\beta s}\vert p(s)\vert^2\,ds  + \E\int_0^T e^{\beta s}\vert q(s)\vert^2\,ds \\
&\leq c  \left[ e^{\beta T}\left(|\mu|([T-d,T])\right)^2 \E \sup_{t\in [0,T]}|\xi(t)| ^2  +   \frac{1}{\beta} \E \int_0^T  e^{\beta s }|f(s)|  ^2\, ds  \right]\nonumber,
 \end{align}
where the constant $c$ depends on $||g||_{\Lcal_{\mathcal{F}}^\infty(\Omega\times[0,T])}$, $||h||_{\Lcal_{\mathcal{F}}^\infty(\Omega\times[0,T])}$, and $T$.
Therefore $ (p,q) \in \mathcal{S}^2_{\mathcal{F}}([0,T])  \times  \Lcal^2_{\mathcal{F}}(\Omega\times [0,T]; \R^{n\times m})$. Pathwise uniqueness follows by standard arguments since the non-classical terms disappears when one calculates the difference between solutions.  By the Yamada-Watanabe type result for weak solutions for BSDEs, see \cite{BuckEngeRasc2005}, also for BSDEs pathwise uniqueness implies the uniqueness in law; and the pathwise uniqueness together with the existence of the weak solution imply the existence of the strong solution. This concludes the proof.
\end{proof}

We apply the results collected in Lemma \ref{lemma:BSDEtrascinata} to prove existence and uniqueness of a solution of the following anticipated
ABSDE \eqref{ABSDEtrascinata}.
\newline As before, let $(W_t)_{t\geq 0}$ be a standard $m$-dimensional Brownian motion, and on the coefficients we make the following assumptions.

\begin{hypothesis}\label{hyp:ABSDEtrascinata}
 $f\in\Lcal^2_{\mathcal{F}}(\Omega\times [0,T], \R^{n})$, 
$g\in\Lcal^\infty_{\mathcal{F}}(\Omega\times [0,T+d], \R^{n\times  n})$
$h\in\Lcal^\infty_{\mathcal{F}}(\Omega\times [0,T+d], \R^{m})$
and $\xi\in\mathcal{B}^2_{\mathcal{F}}( [T-d,T])$ where $0<d \leq T$ is fixed. Let $\mu$ be a finite regular measure on $[T-d,T]$, and $\mu_1$ and $\mu_2$ be finite regular measures on $[-d,0]$.
\end{hypothesis}
\begin{remark}\label{rm-vectorvalued1}
The results in this Section can be extended from measures $\mu$, $\mu_1, \mu_2$ in Hypothesis \ref{hyp:ABSDEtrascinata} to vector valued finite regular measures, that allow to consider more general dependence on the past trajectory. 
\end{remark}
We will prove existence and uniqueness of the following anticipated BSDEs of backward type:
\begin{equation}\label{ABSDEtrascinata}
 \left\lbrace\begin{array}{l}
p(t)=\dis\int_t^Tf(s)ds+\dis\int_t^T\E^{\Fcal_s}\dis\int_{-d}^0g(s-\theta)p(s-\theta)\mu_1(d\theta) ds \\
\quad\quad
+\dis\int_t^T\E^{\Fcal_s}\dis\int_{-d}^0q(s-\theta)h(s-\theta)\mu_2(d\theta) ds+\dis\int_t^Tq_sdW_s \\\quad\quad+\int_{t\vee(T-d)}^T\xi(s) \mu(ds)  \\
p(T-\theta)=0, \; q(T-\theta)=0 \quad \forall \,\theta \in [-d,0 ).
 \end{array}
 \right.
\end{equation}
The ABSDE (\ref{ABSDEtrascinata}) is of the form of equation introduced in \cite{PengYang} and generalized in \cite{ElliottYang}, with the difference that it is given a final datum acting not only in $[T,T+d)$, but also in $[T-d,T]$, see also \cite{GMO}.
Notice that as soon as the process $q$ belongs to $  \Lcal^2_{\mathcal{F}}(\Omega\times [0,T+d], \R^{n\times  m})$
the term $\E^{\Fcal_\cdot}\dis\int_{-d}^0q(\cdot-\theta)h(\cdot-\theta)\mu_2(d\theta) $ has meaning since:
 \begin{align*}
 &\E \int_0^T \int_{-d}^0 |q(t-\theta)|^2 |h(t-\theta))|^2 |\mu_2 | (d\theta) \, dt \\
  &\quad\leq |\mu| ^2 ([-d,0]) ||h||^2_{\Lcal^\infty} \E \int_{0}^{T+d} |q(\rho)|^2  \, d\rho  
< +\infty
 \end{align*} 
\begin{theorem}\label{proposition:ABSDEtrascinata} Let Hypothesis \ref{hyp:ABSDEtrascinata} holds true. Then the ABSDE (\ref{ABSDEtrascinata}) admits a unique adapted solution, that is a pair of processes $(p,q)\in\Lcal^2_{\mathcal{F}}(\Omega\times [0,T+d], \R^{n})\times \Lcal^2_{\mathcal{F}}(\Omega\times [0,T+d], \R^{n\times  m})$, satisfying the integral equation (\ref{ABSDEtrascinata}).
\end{theorem}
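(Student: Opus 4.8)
The plan is to realise the ABSDE \eqref{ABSDEtrascinata} as the fixed point of a map each of whose iterations is a linear BSDE of the type already solved in Lemma \ref{lemma:BSDEtrascinata}. The key structural observation is that \eqref{ABSDEtrascinata} contains \emph{no} local linear term in $(p,q)$: all the dependence on $g$ and $h$ is anticipated, through $p(s-\theta)$ and $q(s-\theta)$ with $\theta\in[-d,0]$. Hence, if these anticipated terms are frozen, what remains is a BSDE with a pure source term, to which Lemma \ref{lemma:BSDEtrascinata} applies with $g\equiv 0$, $h\equiv 0$ (so that $\Q=\P$ and the exponential weight equals $1$). I would work on the closed subspace of $\Lcal^2_{\mathcal{P}}(\Omega\times[0,T+d],\R^n)\times\Lcal^2_{\mathcal{P}}(\Omega\times[0,T+d],\R^{n\times m})$ consisting of pairs vanishing on $(T,T+d]$, which encodes the terminal data $p(T-\theta)=q(T-\theta)=0$, equipped for a parameter $\beta>0$ with the weighted norm $\|(P,Q)\|_\beta^2=\E\int_0^{T+d}e^{\beta s}\big(|P(s)|^2+|Q(s)|^2\big)\,ds$.

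Concretely, given $(P,Q)$ in this subspace I would set
\[
F(s)=f(s)+\E^{\Fcal_s}\!\int_{-d}^0 g(s-\theta)P(s-\theta)\,\mu_1(d\theta)+\E^{\Fcal_s}\!\int_{-d}^0 Q(s-\theta)h(s-\theta)\,\mu_2(d\theta),\qquad s\in[0,T],
\]
and define $\Phi(P,Q)=(p,q)$, where $(p,q)$ is the solution on $[0,T]$ of the linear BSDE with source $F$, terminal term $\xi\,\mu(ds)$ and $g=h=0$, extended by zero to $(T,T+d]$. That $\Phi$ is well defined needs $F\in\Lcal^2_{\mathcal{P}}(\Omega\times[0,T],\R^n)$: the integrability is exactly the bound displayed just before the statement, while the progressive measurability of $s\mapsto\E^{\Fcal_s}[\,\cdot\,]$ is the standard optional-projection argument for ABSDEs. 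Lemma \ref{lemma:BSDEtrascinata} then yields $(p,q)$ together with its a priori estimates.

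The crux is to show that $\Phi$ is a contraction for $\beta$ large. For two inputs, the difference $(\delta p,\delta q)=\Phi(P_1,Q_1)-\Phi(P_2,Q_2)$ solves the same BSDE with $\xi\equiv 0$ and source $\delta F$ built from $\delta P,\delta Q$, so estimate \eqref{norma:BSDEtrascinata1} (with $g=h=0$, hence $c=c(T)$) gives
\[
\E\int_0^T\tfrac{\beta}{2}e^{\beta s}|\delta p|^2\,ds+\E\int_0^T e^{\beta s}|\delta q|^2\,ds\le \frac{c}{\beta}\,\E\int_0^T e^{\beta s}|\delta F(s)|^2\,ds.
\]
It then remains to dominate the right-hand side by $\|(\delta P,\delta Q)\|_\beta^2$ with a constant free of $\beta$. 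Conditional Jensen followed by Cauchy--Schwarz against the finite measures $\mu_1,\mu_2$ yields a pointwise bound $|\delta F(s)|^2\le C'\big(\E^{\Fcal_s}\!\int_{-d}^0|\delta P(s-\theta)|^2|\mu_1|(d\theta)+\E^{\Fcal_s}\!\int_{-d}^0|\delta Q(s-\theta)|^2|\mu_2|(d\theta)\big)$; integrating in $(s,\omega)$, using Fubini and the change of variable $\rho=s-\theta$, together with $e^{\beta s}=e^{\beta\theta}e^{\beta\rho}\le e^{\beta\rho}$ (valid since $\theta\le 0$), I would get $\E\int_0^T e^{\beta s}|\delta F|^2\,ds\le C\,\|(\delta P,\delta Q)\|_\beta^2$ with $C=C(\|g\|_{\Lcal^\infty},\|h\|_{\Lcal^\infty},|\mu_1|([-d,0]),|\mu_2|([-d,0]))$. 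Since $\delta p,\delta q$ vanish on $(T,T+d]$, combining these bounds gives $\|\Phi(P_1,Q_1)-\Phi(P_2,Q_2)\|_\beta^2\le\big(cC/\beta+2cC/\beta^2\big)\|(\delta P,\delta Q)\|_\beta^2$, a strict contraction once $\beta$ is large.

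Banach's fixed point theorem then produces a unique $(p,q)$ with $\Phi(p,q)=(p,q)$, i.e. a unique solution of \eqref{ABSDEtrascinata} in the chosen space; uniqueness can also be read off directly, since the difference of any two solutions obeys the same estimate and must vanish. I expect the only genuinely delicate point to be this contraction step: absorbing the anticipated (future) values $\delta P(s-\theta),\delta Q(s-\theta)$ into the weighted norm of $(\delta P,\delta Q)$ while keeping $C$ independent of $\beta$, so that the $1/\beta$ gain supplied by Lemma \ref{lemma:BSDEtrascinata} is not cancelled. The sign condition $\theta\le 0$, which forces $e^{\beta s}\le e^{\beta(s-\theta)}$, is precisely what makes this book-keeping work.
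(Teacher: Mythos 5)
Your proposal is correct and follows essentially the same route as the paper: freeze the anticipated terms to obtain a linear BSDE of the type in Lemma \ref{lemma:BSDEtrascinata} (with $g=h=0$), and run a Banach fixed point in the $\beta$-weighted $\Lcal^2$ norm on $[0,T+d]$, using estimate \eqref{norma:BSDEtrascinata1} for the $1/\beta$ gain and a Fubini plus change-of-variable argument to absorb the shifted values $\delta P(s-\theta)$, $\delta Q(s-\theta)$. Your explicit tracking of the weight via $e^{\beta s}\le e^{\beta(s-\theta)}$ for $\theta\le 0$ is in fact slightly more careful than the paper's own display, which drops the exponential weight in the intermediate inequalities.
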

\begin{proof}We notice that by the data of our problem, if it exists, the pair of processes $(p,q)$ solution
to the ABSDE (\ref{ABSDEtrascinata}), is such that $p(t)=q(t)=0$ for $t\in(T,T+d)$.

Let us consider the more general equation, for  $(\xi, \eta) \in  \Lcal^2_{\mathcal{F}}(\Omega;B([T-d,T+d];\R^n)) \times 
\Lcal^2_{\mathcal{P}}(\Omega\times [T,T+d], \R^{n\times  m})$

\begin{equation}\label{ABSDEtrascinatagen}
 \left\lbrace\begin{array}{l}
p(t)=\dis\int_t^Tf(s)ds+\dis\int_t^T\E^{\Fcal_s}\dis\int_{-d}^0g(s-\theta)p(s-\theta)\mu_1(d\theta) ds\\
\quad\quad +\dis\int_t^T
\E^{\Fcal_s}\dis\int_{-d}^0q(s-\theta)h(s-\theta)\mu_2(d\theta) ds +\dis\int_t^Tq(s)dW(s)\\
\qquad +\dis\int_{t\vee(T-d)}^T\xi(s) \mu(ds)  \\
p(T-\theta)= \xi(\theta) 
, \; q(T-\theta)= \eta(\theta) \quad \forall \,\theta \in [-d,0 ).
 \end{array}
 \right.
\end{equation}

\noindent We prove existence of a solution by a fixed point argument on the space $\Lcal^2_{\mathcal{F}}(\Omega\times [0,T+d], \R^{n})\times 
\Lcal^2_{\mathcal{F}}(\Omega\times [0,T+d], \R^{n\times  m})$ endowed with the equivalent norm \begin{equation}\label{norma-beta}
\Vert (p,q) \Vert _\beta= \E  \int_0^{T+d}   \vert p(s)\vert^2e^{\beta s}\,ds+   \int_0^{T+d} \vert q(s)\vert^2e^{\beta s}\,ds,
\end{equation}
with $\beta>0$ to be chosen in the following.
\newline Given $(y,z)\in \Lcal^2_{\mathcal{F}}(\Omega\times [0,T+d], \R^{n})\times 
\Lcal^2_{\mathcal{F}}(\Omega\times [0,T+d], \R^{n\times  m})$
we define the map $\Gamma :\Lcal^2_{\mathcal{F}}(\Omega\times [0,T+d], \R^{n})\times 
\Lcal^2_{\mathcal{F}}(\Omega\times [0,T+d], \R^{n\times  m})\rightarrow
 \Lcal^2_{\mathcal{F}}(\Omega\times [0,T+d], \R^{n})\times \Lcal^2_{\mathcal{F}}(\Omega\times [0,T+d], \R^{n\times  m})$.
\newline The pair $(p,q):=\Gamma (y,z)$ is given by
the pair of processes solution of the following BSDE given in integral form:
\begin{equation}\label{BSDE-gamma}
 \left\lbrace\begin{array}{l}
p(t)=\dis\int_t^Tf(s)ds+\dis\int_t^T\E^{\Fcal_s}\dis\int_{-d}^0g(s-\theta)y(s-\theta)\mu_1(d\theta) ds \\ \qquad  +\dis\int_t^T
\E^{\Fcal_s}\dis\int_{-d}^0z(s-\theta)h(s-\theta)\mu_2(d\theta) ds\\
\quad\quad+\dis\int_t^Tq(s)dW_s +\int_{t\vee(T-d)}^T\xi(s) \mu(ds)  \\
p(T-\theta)= \xi(\theta) 
, \; q(T-\theta)= \eta(\theta )\quad \forall \,\theta \in [-d,0 ).
 \end{array}
 \right.
\end{equation}
Thanks to Lemma \ref{lemma:BSDEtrascinata}  it turns out that $(p,q)
\in \Lcal^2_{\mathcal{F}}(\Omega\times [0,T], \R^{n})\times 
\Lcal^2_{\mathcal{F}}(\Omega\times [0,T], \R^{n\times  m})$, and together with the condition given in (\ref{BSDE-gamma}) it turns out that $(p,q)\in\Lcal^2_{\mathcal{F}}(\Omega\times [0,T+d], \R^{n})\times 
\Lcal^2_{\mathcal{F}}(\Omega\times [0,T+d], \R^{n\times  m})$.
So $\Gamma$ is well defined. Next we prove that $\Gamma$ is a contraction.
Let $y,\bar y \in \Lcal^2_{\mathcal{F}}(\Omega\times [0,T+d], \R^{n}) $ and $z, \bar z \in 
\Lcal^2_{\mathcal{F}}(\Omega\times [0,T+d], \R^{n\times  m})$, and set $\hat y=y-\bar y,\, \hat z=z-\bar z$. We denote
$(\hat p, \hat q)= \Gamma(y,z)-\Gamma (\bar y,\bar z).$ So 
 \begin{multline}\label{BSDE-gamma-difference}
\hat p(t)=\dis\int_t^Tf(s)ds+\dis\int_t^T\E^{\Fcal_s}\dis\int_{-d}^0g(s-\theta)\hat y(s-\theta)\mu_1(d\theta) ds \\ +\dis\int_t^T
\E^{\Fcal_s}\dis\int_{-d}^0\hat z(s-\theta)h(s-\theta)\mu_2(d\theta) ds+\dis\int_t^T\hat q(s)dW_s .
 \end{multline}
 Equation (\ref{BSDE-gamma-difference}) is a special case of the BSDE (\ref{BSDEtrascinata}), whose existence and uniqueness
 have been studied in Lemma \ref{lemma:BSDEtrascinata}. 
 By estimate \eqref{norma:BSDEtrascinata1} we get (here and in the following $c$ is a constant whose value can change from line to line)
 \begin{align}\label{norma:ABSDEtrascinataGamma}
 &\E\int_0^T \left( \frac{\beta}{2}\vert \hat p(s)\vert^2+\vert \hat q(s)\vert^2\right)e^{\beta s}\,ds\\ \nonumber 
 &\leq \frac{2c}{\beta}\E\int_0^T \vert\int_{-d}^0 g(s-\theta)\hat y(s-\theta)\mu_1(d\theta) ds +
\dis\int_{-d}^0\hat  z(s-\theta)h(s-\theta)\mu_2(d\theta)\vert^2 ds\\ \nonumber
& \leq \frac{2c}{\beta}\left\lbrace\E\int_0^T \int_{-d}^0 \vert\hat y(s-\theta)\vert^2 \vert\mu_1\vert(d\theta) ds +\E\int_0^T
\int_{-d}^0\vert\hat z(s-\theta)\vert^2 \vert\mu_2\vert(d\theta) ds\right\rbrace\\ \nonumber
& \leq \frac{2c}{\beta}\left\lbrace\int_{-d}^0\left[\E\int_0^T  \vert\hat y(s-\theta)\vert^2 ds\right] \vert\mu_1\vert(d\theta) \right.\\ \nonumber&\qquad\left.+
\int_{-d}^0\left[\E\int_t^T
\vert\hat z(s-\theta)\vert^2 ds\right]\vert\mu_2\vert(d\theta) \right\rbrace\\ \nonumber
&= \frac{2c}{\beta}\left\lbrace\int_{-d}^0\left[\E\int_0^T  \vert\hat y(s)\vert^2 ds\right] \vert\mu_1\vert(d\theta) +\int_{-d}^0\left[\E\int_t^T
\vert\hat z(s)\vert^2 ds\right]\vert\mu_2\vert(d\theta) \right\rbrace\\ \nonumber
&\leq c\frac{2}{\beta}\Vert (\hat y,\hat z)\Vert_\beta.
\end{align}
By choosing $\beta>0$ such that $c\frac{2}{\beta}<1$ we have proved that $\Gamma $ is a contraction, and its unique fixed point is the unique solution of the ABSDE
(\ref{ABSDEtrascinata}).
\end{proof}
Equation (\ref{ABSDEtrascinata}) can be written in differential form if we make some additional assumptions on the measure $\mu$.
If we assume that
\[
 \mu=c\delta_T+\tilde \mu,\; c\in\R,
\]
where $\tilde \mu$ is a measure on $(T-d,T)$ absolutely continuous with respect to the Lebesgue measure, equation (\ref{ABSDEtrascinata})
can be written in differential form as
\begin{equation}\label{ABSDEtrascinata-formadiff}
 \left\lbrace\begin{array}{l}
-dp(t)=f(t)dt+\E^{\Fcal_t}\dis\int_{-d}^0g(t-\theta)p(t-\theta)\mu_1(d\theta) dt  \\ +
\E^{\Fcal_t}\dis\int_{-d}^0q(t-\theta)h(t-\theta)\mu_2(d\theta) dt 
+q(t)dW(t)+\xi(t) \eta_{\tilde \mu}(t) dt\\
p(T)=c\xi(T),\;p(T-\theta)=0, \; q(T-\theta)=0 \quad \forall \,\theta \in [-d,0 ),
 \end{array}
 \right.
\end{equation}
where for $t\in[T-d,T]$, $\eta_{\tilde \mu}$ is the Radon-Nikodym derivative of $\tilde \mu$ with respect to the Lebesgue measure,
that is $\eta_{\tilde \mu}$ is defined by the relation
\[
\tilde \mu(dt)=\eta_{\tilde \mu}(t)dt.
\]
If we do not make additional assumptions on the measure $\mu$, the differential form of equation (\ref{ABSDEtrascinata})
does not make sense, since the term 
\[
 d \int_{t\vee(T-d)}^T\xi(s) \mu(ds) 
 \]
is not well defined for $t\in [T-d,T]$.
In the following, we build an approximating ABSDE whose differential form makes sense. This approximating ABSDE is obtained by a suitable approximation of $\mu$: the construction of this sequence of approximating measures $(\mu_n)_{n\geq 1}$ is given in the following Lemma, which is the analogous of Lemma 5 in \cite{GMO}. In the following, given $I\subset \R$ we denote with $C_b(I,\R])$ the space of bounded an continuous functions from $I$ to $\R$; with the notation $\lambda_{[T-d,T]}$, we denote the Lebesgue measure on $[T-d,T]$.
\begin{lemma}\label{lemma:approx-measure}
 Let $\bar\mu$ be a finite regular measure on $[T-d,T]$, such that $\bar \mu (\left\lbrace T \right\rbrace)=0$.
There exists a sequence $(\bar\mu_n)_{n\geq 1}$ of finite regular measures on $[T-d,T]$, absolutely continuous with respect to $\lambda_{[T-d,T]}$ and such that
 \begin{equation}\label{conv-a-mu}
  \bar\mu=\lim_{n\rightarrow\infty}\bar \mu_n, \quad \text{ in the sense of the narrow convergene,}
 \end{equation}
that is for every $f\in C_b([T-d,T],\R)$
\begin{equation}\label{conv-a-mu:expl}
  \int_{T-d}^T f\,d\bar\mu=\lim_{n\rightarrow\infty}\int_{T-d}^T f\, d\bar\mu_n
 \end{equation}
\end{lemma}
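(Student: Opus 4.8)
The plan is to obtain each $\bar\mu_n$ as the restriction to $[T-d,T]$ of a \emph{one-sided} mollification of $\bar\mu$: the one-sidedness will ensure that no mass is ever pushed below the left endpoint, while the hypothesis $\bar\mu(\{T\})=0$ will guarantee that the mass pushed above the right endpoint $T$ (and then discarded) is asymptotically negligible.

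\smallskip
\noindent\emph{Construction.} First I would fix $\rho\in C_c^\infty(\R)$ with $\rho\ge 0$, $\int_\R\rho\,dx=1$ and $\mathrm{supp}\,\rho\subset[0,1]$, and set $\rho_n(x)=n\rho(nx)$, so that $\mathrm{supp}\,\rho_n\subset[0,1/n]$. Viewing $\bar\mu$ as a finite Borel measure on $\R$ supported in $[T-d,T]$, I would put $g_n(x)=\int_{T-d}^T\rho_n(x-y)\,\bar\mu(dy)$. Then $g_n\ge 0$, $g_n\in L^1(\R)$, and since the rightward kernel moves the mass sitting at $y$ into $[y,y+1/n]$, the density $g_n$ is supported in $[T-d,T+1/n]$; in particular the left endpoint is never crossed. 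I then define $\bar\mu_n(dx)=g_n(x)\,\mathbf 1_{[T-d,T]}(x)\,\lambda_{[T-d,T]}(dx)$. This is by construction a finite nonnegative measure on $[T-d,T]$, absolutely continuous with respect to $\lambda_{[T-d,T]}$; having an $L^1$ density on a compact interval, it is strongly regular, so the $\bar\mu_n$ are of the required type.

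\smallskip
\noindent\emph{Weak-$*$ convergence.} Let $f\in C_b([T-d,T],\R)$; since $[T-d,T]$ is compact this is just $C([T-d,T],\R)$, and I extend $f$ to a bounded continuous $\tilde f$ on $\R$. Splitting off the part of $g_n$ living to the right of $T$, I would write $\int f\,d\bar\mu_n=\int_\R\tilde f\,g_n\,dx-R_n$ with $R_n=\int_{(T,T+1/n]}\tilde f(x)g_n(x)\,dx$. For the main term, Fubini together with the substitution $x=y+w$ gives
\[
\int_\R \tilde f\,g_n\,dx=\int_{T-d}^T\Big(\int_0^{1/n}\tilde f(y+w)\,\rho_n(w)\,dw\Big)\,\bar\mu(dy),
\]
and the inner average tends to $\tilde f(y)$ uniformly in $y$ by uniform continuity of $\tilde f$; since $\bar\mu$ is finite and $\tilde f$ bounded, dominated convergence yields $\int_\R\tilde f\,g_n\,dx\to\int_{T-d}^T\tilde f\,d\bar\mu=\int_{T-d}^T f\,d\bar\mu$.

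\smallskip
\noindent\emph{The boundary remainder, and where the hypothesis enters.} Here lies the only delicate point. I would estimate $\|\tilde f\|_\infty^{-1}|R_n|\le\int_{(T,T+1/n]}g_n\,dx$, and interchanging the order of integration,
\[
\int_{(T,T+1/n]}g_n\,dx=\int_{T-d}^T\Big(\int_{(T,T+1/n]}\rho_n(x-y)\,dx\Big)\bar\mu(dy)\le\bar\mu\big((T-1/n,T]\big),
\]
because the inner integral is at most $1$ and vanishes unless $y>T-1/n$. By continuity from above, $\bar\mu((T-1/n,T])\downarrow\bar\mu(\{T\})=0$, so $R_n\to 0$. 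Combining the two displays gives $\int f\,d\bar\mu_n\to\int f\,d\bar\mu$ for every $f\in C([T-d,T],\R)$, which is precisely (\ref{conv-a-mu:expl}), i.e.\ (\ref{conv-a-mu}). The \emph{main obstacle} is exactly this control at the right endpoint: the one-sided kernel protects the left endpoint automatically, whereas without $\bar\mu(\{T\})=0$ the discarded mass near $T$ would not vanish and the construction would lose an atom's worth of mass in the limit. For a signed $\bar\mu$ the same argument applies to its Jordan components, the remainder then being bounded by $|\bar\mu|((T-1/n,T])$.
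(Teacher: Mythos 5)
Your argument is correct and complete. Note first that the paper itself does not prove this lemma at all: it only states that it is ``the analogous of Lemma 5 in \cite{GMO}'' and delegates the proof to that reference, so there is no in-paper argument to compare against; your one-sided mollification is a perfectly good self-contained substitute. The two points that could sink such a construction are exactly the ones you isolate: the support of the kernel must not push mass below $T-d$ (handled by taking $\mathrm{supp}\,\rho_n\subset[0,1/n]$), and the mass pushed past $T$ and then truncated must vanish, which is where $\bar\mu(\{T\})=0$ enters via $\bar\mu((T-1/n,T])\downarrow\bar\mu(\{T\})$. Two minor points you should tighten. First, ``uniform continuity of $\tilde f$'' should be invoked only on a compact neighbourhood such as $[T-d,T+1]$, since a bounded continuous extension to $\R$ need not be uniformly continuous globally (or simply extend $f$ by the constant value $f(T)$ to the right of $T$). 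Second, since strongly regular measures are in general signed, the signed case is not an afterthought but the actual statement; your reduction to the Jordan components works, but it silently uses that $\bar\mu(\{T\})=0$ implies $|\bar\mu|(\{T\})=0$ --- this is true because the Jordan components are mutually singular, so a singleton carries mass from at most one of them and $|\bar\mu|(\{T\})=|\bar\mu(\{T\})|$, and it is worth saying explicitly.
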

Notice that we can apply the previous Lemma to the approximation of the measure $\mu$ by defining $\bar \mu$ such that for any $A\in \calb ([T-d,T])$
 \begin{equation}\label{bar-mu}
   \bar\mu(A)=\mu(A\backslash\left\lbrace T\right\rbrace):
 \end{equation}
the measure $\bar \mu$ is obtained from the original measure $\mu$, by subtracting to $\mu$ its mass in
 $\left\lbrace T\right\rbrace$.
 Lemma \ref{lemma:approx-measure} ensures that there exists a sequence of measures $(\bar\mu_n)_{n\geq 1}$, on $[T-d,T]$,
 which are absolutely continuous with respect to the Lebesgue measure on $[T-d,T]$ and converge to $\bar \mu$.

The next step is to build an approximation, in a sense that we are going to precise, of the equation $(\ref{ABSDEtrascinata})$, by approximating 
$\bar\mu $ obtained by $\mu$ in (\ref{bar-mu}).
\begin{proposition}\label{prop:approxABSDEtrascinata}
 Let Hypothesis \ref{hyp:ABSDEtrascinata} holds true and assume $\xi \in \mathcal{S}^2_{\mathcal F }([T-d,T]),$ let $\bar\mu$ be defined by (\ref{bar-mu}), and let us consider $(\bar\mu_n)_n$ the approximations of $\bar\mu$, absolutely continuous with respect to the Lebesgue measure on $(T-d,T)$.
Let us consider the approximating ABSDEs (of ``standard'' type):
 \begin{equation}\label{ABSDEtrascinata-approx-mu}
 \left\lbrace\begin{array}{l}
p^n(t)=\dis\int_t^Tf(s)ds+\dis\int_t^T\E^{\Fcal_s}\dis\int_{-d}^0g(s-\theta)p^n(s-\theta)\mu_1(d\theta) ds \\
\quad\quad\quad+\dis\int_t^T
\E^{\Fcal_s}\dis\int_{-d}^0q^n(s-\theta)h(s-\theta)\mu_2(d\theta) ds \\ \qquad\quad +\dis\int_t^Tq^n(s)dW_s +\int_{t\vee(T-d)}^T\xi(s) \bar\mu_n(ds)+ \mu({T})\xi(T),  \\
p^n(T-\theta)=0, \; q^n(T-\theta)=0 \quad \forall \,\theta \in [-d,0 ).
 \end{array}
 \right.
 \end{equation}
Then the pair $(p^n,q^n)$, solution to (\ref{ABSDEtrascinata-approx-mu}) converges
in $\Lcal^2_{\mathcal{F}}(\Omega\times [0,T+d], \R^{n})\times 
\Lcal^2_{\mathcal{F}}(\Omega\times [0,T+d], \R^{n\times  m})$ to the pair $(p,q)$ solution to
(\ref{ABSDEtrascinata}).
\end{proposition}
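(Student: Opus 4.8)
The plan is to view $(p,q)$ and $(p^n,q^n)$ as the fixed points of two solution maps that differ only through their final-datum measure, and to exploit that the contraction constant found in the proof of Theorem~\ref{proposition:ABSDEtrascinata} does not depend on that measure. Let $\Gamma$ be the map of (\ref{BSDE-gamma}) associated with $\mu$, and let $\Gamma_n$ be the analogous map in which the term $\int_{t\vee(T-d)}^T\xi(s)\,\mu(ds)$ is replaced by $\int_{t\vee(T-d)}^T\xi(s)\,\bar\mu_n(ds)+\mu(\{T\})\xi(T)$, as in (\ref{ABSDEtrascinata-approx-mu}); then $(p,q)$ and $(p^n,q^n)$ are their respective unique fixed points. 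Inspecting (\ref{norma:ABSDEtrascinataGamma}), the contraction constant $k=2c/\beta$ depends only on $\|g\|_{\Lcal^\infty}$, $\|h\|_{\Lcal^\infty}$, $|\mu_1|$, $|\mu_2|$, $T$ and $\beta$, but not on the final measure, because that term cancels when two images of the same map are subtracted. Hence, for $\beta$ large, $\Gamma$ and $\Gamma_n$ are both contractions of the \emph{same} constant $k<1$ for $\|\cdot\|_\beta$, and the standard comparison of nearby contractions gives
\begin{equation*}
\|(p,q)-(p^n,q^n)\|_\beta\le\frac{1}{1-k}\,\bigl\|\Gamma(p^n,q^n)-\Gamma_n(p^n,q^n)\bigr\|_\beta .
\end{equation*}
Everything thus reduces to showing that the right-hand side tends to $0$.

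Next I would identify $(P^n,Q^n):=\Gamma(p^n,q^n)-\Gamma_n(p^n,q^n)$. Since both maps act on the same frozen input $(p^n,q^n)$ and share the same $f$, the source terms built from $g,\mu_1$ and $h,\mu_2$ cancel, and by (\ref{bar-mu}) only the final-datum terms survive; thus $(P^n,Q^n)$ solves the \emph{non-anticipated} linear BSDE
\begin{equation*}
P^n(t)=\int_t^T Q^n(s)\,dW(s)+\Phi_n(t),\qquad \Phi_n(t):=\int_{t\vee(T-d)}^T\xi(s)\,\nu_n(ds),\quad \nu_n:=\bar\mu-\bar\mu_n,
\end{equation*}
with $P^n=Q^n=0$ on $(T,T+d)$. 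Writing $\Phi_n(t)=\Phi_n(0)-\int_{[T-d,t)}\xi\,d\nu_n$, whose last term is $\Fcal_t$-measurable and of finite variation in $t$, the process $P^n(t)+\int_{[T-d,t)}\xi\,d\nu_n$ solves $Y(t)=\int_t^T Q^n\,dW+\Phi_n(0)$ and therefore, being $\Fcal_t$-measurable with the stochastic integral of zero $\Fcal_t$-conditional mean, equals the martingale $\E^{\Fcal_t}[\Phi_n(0)]$. Consequently $P^n(t)=\E^{\Fcal_t}[\Phi_n(t)]$, so $\E|P^n(t)|^2\le\E|\Phi_n(t)|^2$ by conditional Jensen, while $Q^n$ is the representation integrand of $\E^{\Fcal_t}[\Phi_n(0)]$, so $\E\int_0^{T}|Q^n|^2\,ds\le\E|\Phi_n(0)|^2$ by the It\^o isometry. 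Hence $\|(P^n,Q^n)\|_\beta$ is bounded by a constant times $\E\int_0^{T+d}e^{\beta t}|\Phi_n(t)|^2\,dt+\E|\Phi_n(0)|^2$, and it remains only to prove that this quantity vanishes.

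The hard part is exactly this last control of $\Phi_n$, and here the main obstacle is that $\bar\mu_n\to\bar\mu$ only in the $w^*$ sense, so that $|\nu_n|([T-d,T])$ need \emph{not} tend to $0$; the a priori estimates (\ref{normaS:BSDEtrascinata})--(\ref{norma:BSDEtrascinata1}), which dominate the final term by the total variation of the measure, are therefore too crude and must be bypassed. I would argue as follows. By Banach--Steinhaus a $w^*$-convergent sequence is norm bounded, so $\sup_n|\nu_n|([T-d,T])=:C<\infty$. Fix $\omega$ off a null set, so that $\xi(\cdot,\omega)\in C([T-d,T])$. Then $\Phi_n(0)(\omega)=\int_{[T-d,T]}\xi(\cdot,\omega)\,d\nu_n\to0$ directly by (\ref{conv-a-mu:expl}), and $|\Phi_n(0)(\omega)|\le C\sup_s|\xi(s,\omega)|$; since $\xi\in\mathcal{S}^2_{\mathcal F}([T-d,T])$, dominated convergence yields $\E|\Phi_n(0)|^2\to0$. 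For the time-integrated term, Fubini gives, for each such $\omega$,
\begin{equation*}
\int_{T-d}^T e^{\beta t}|\Phi_n(t)|^2\,dt=\iint \widetilde K_\omega(s_1,s_2)\,\nu_n(ds_1)\,\nu_n(ds_2),\qquad \widetilde K_\omega(s_1,s_2)=\xi(s_1)\xi(s_2)\!\int_{T-d}^{\,s_1\wedge s_2}\!\!e^{\beta t}\,dt,
\end{equation*}
with $\widetilde K_\omega$ continuous on $[T-d,T]^2$. Since $\nu_n\to0$ weakly-$*$ with uniformly bounded total variation, the functionals $\psi\mapsto\iint\psi\,d(\nu_n\otimes\nu_n)$ are uniformly bounded and converge to $0$ on every product $\psi_1(s_1)\psi_2(s_2)$, hence, by Stone--Weierstrass and uniform boundedness, on all of $C([T-d,T]^2)$; thus the double integral tends to $0$ for each $\omega$. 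A final dominated-convergence step in $\omega$ (the integrand is bounded by $c\sup_s|\xi(s,\omega)|^2$, which is integrable) gives $\E\int_{T-d}^T e^{\beta t}|\Phi_n(t)|^2\,dt\to0$. Together with the estimate on $[0,T-d]$, where $\Phi_n(t)=\Phi_n(0)$, this shows $\|(P^n,Q^n)\|_\beta\to0$, whence $\|(p,q)-(p^n,q^n)\|_\beta\to0$, which is precisely the claimed convergence in $\Lcal^2_{\mathcal P}(\Omega\times[0,T+d],\R^n)\times\Lcal^2_{\mathcal P}(\Omega\times[0,T+d],\R^{n\times m})$.
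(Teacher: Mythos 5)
Your argument is correct, and it follows a genuinely different route from the paper's. The paper first shows that $(p^n,q^n)$ is a Cauchy sequence, by writing the equation for $(p^n-p^k,q^n-q^k)$ as a standard ABSDE and invoking an a priori estimate whose right-hand side is $\E\bigl|\int_{t\vee(T-d)}^T\xi(s)(\bar\mu_n-\bar\mu_k)(ds)\bigr|^2$, asserted to vanish by $w^*$-convergence; it then identifies the limit by passing to the limit term by term in \eqref{ABSDEtrascinata-approx-mu}. You instead exploit that $(p,q)$ and $(p^n,q^n)$ are fixed points of two contractions with the same constant, so that the stability inequality $\|(p,q)-(p^n,q^n)\|_\beta\le(1-k)^{-1}\|\Gamma(p^n,q^n)-\Gamma_n(p^n,q^n)\|_\beta$ reduces everything to an explicitly solvable perturbation BSDE driven only by $\Phi_n(t)=\int_{t\vee(T-d)}^T\xi\,d(\bar\mu-\bar\mu_n)$; this bypasses both the Cauchy step and the limit-identification step. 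What your approach buys is, first, a direct quantitative comparison with the limit equation rather than an indirect one, and second --- and more substantively --- a complete proof of the analytic fact that both arguments ultimately rest on, namely that $w^*$-convergence of $\bar\mu_n$ forces $\E|\Phi_n(0)|^2\to0$ and $\E\int e^{\beta t}|\Phi_n(t)|^2dt\to0$: your use of Banach--Steinhaus for the uniform total-variation bound, the Fubini rewriting of the time integral as an integral against $\nu_n\otimes\nu_n$, Stone--Weierstrass on $C([T-d,T]^2)$, and dominated convergence in $\omega$ supplies exactly the detail that the paper's phrase ``by the $w^*$-convergence'' leaves implicit (and correctly uses the pathwise continuity of $\xi$ guaranteed by the hypothesis $\xi\in\mathcal{S}^2_{\mathcal F}([T-d,T])$). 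Two cosmetic points: the quantity $\Vert\cdot\Vert_\beta$ in \eqref{norma-beta} is a sum of squared norms, so the triangle inequality in your contraction comparison should be applied to its square root; and one should fix once and for all the endpoint convention in $\int_{t\vee(T-d)}^T$ so that the decomposition $\Phi_n(0)=\int_{[T-d,t)}\xi\,d\nu_n+\Phi_n(t)$ is consistent --- neither affects the argument.
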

\begin{proof}Let us first prove that the sequence $(p^n,q^n)_n$ is a Cauchy sequence in
$\Lcal^2_{\mathcal{F}}(\Omega\times [0,T+d], \R^{n})\times 
\Lcal^2_{\mathcal{F}}(\Omega\times [0,T+d], \R^{n\times  m})$.
The equation satisfied by $(p^n(t)-p^k(t),q^n(t)-q^k(t)), \,n,k\geq 1$, turns out to be the following ABSDE:
\begin{equation}\label{ABSDEtrascinata-approx-nk}
 \left\lbrace\begin{array}{l}
p^n(t)-p^k(t)=\dis\int_t^T\E^{\Fcal_s}\dis\int_{-d}^0g(s-\theta)\left(p^n(s-\theta)-p^k(s-\theta)\right)\mu_1(d\theta) ds\\
\qquad\qquad\qquad +\dis\int_t^T
\E^{\Fcal_s}\dis\int_{-d}^0\left(q^n(s-\theta)-q^k(s-\theta)\right)h(s-\theta)\mu_2(d\theta) ds \\
\qquad\qquad\qquad+\dis\int_t^T\left(q^n(s)-q^k(s)\right)dW(s) +\int_{t\vee(T-d)}^T\xi(s) \bar\mu_n(ds)  \\ \qquad\qquad\qquad \displaystyle - \int_{t\vee(T-d)}^T\xi(s) \bar\mu_k(ds),  \\ \\
p^n(T-\theta)-p^k(T-\theta)=0, \; q^n(T-\theta)-q^k(T-\theta)=0 \quad \forall \,\theta \in [-d,0 ).
 \end{array}
 \right.
 \end{equation}
Notice that the terms $\int_{t\vee(T-d)}^T\xi(s) \bar\mu_n(ds) ,\, \int_{t\vee(T-d)}^T\xi(s) \bar\mu_k(ds),$ are Ito terms, so equation (\ref{ABSDEtrascinata-approx-nk}) is a standard ABSDE. By standard estimates, see e.g. \cite{ElliottYang}, Lemma 2.3, formula (3), as $n,k\rightarrow\infty $
 \begin{multline*}
\E \vert p^n(t)-p^k(t)\vert^2+\E\int_t^T  \vert p^n(s)-p^k(s)\vert ^2\,ds +\E\int_t^T
\vert q^n(s)-q^k(s)\vert\ ^2 ds \\\leq \E \left\vert \int_{t\vee(T-d)}^T \xi(s)( \bar\mu_n(ds)-
\bar\mu_k(ds))\right\vert^2\rightarrow 0  
 \end{multline*}
 by the narrow convergence of the sequence of measures $\bar \mu_n$. So the sequence $(p^n, q^n)$ is a Cauchy sequence in $\Lcal^2_{\mathcal{F}}(\Omega\times [0,T+d], \R^{n})\times \Lcal^2_{\mathcal{F}}(\Omega\times [0,T+d], \R^{n\times  m})$. It remains to show that it converges to $(p,q)$ solution of equation (\ref{ABSDEtrascinata}). Let us denote 
$$
(\bar p, \bar q)= \lim_{n\rightarrow \infty } (p^n,q^n) \text{ in } \Lcal^2_{\mathcal{F}}(\Omega\times [0,T+d], \R^{n})\times 
\Lcal^2_{\mathcal{F}}(\Omega\times [0,T+d], \R^{n\times  m}).
$$
and  for every $t \in [0,T]$:
$$
\lim_{n \to +\infty} \E \vert p^n(t)-\bar{p}(t)\vert^2 =0
$$
Notice also that $p^{n}(s-\theta), \bar{p}(s-\theta)=0,  \text{ for } s-\theta > T$: thus we have that, for all $ t \in [0,T]$,
\begin{align*}
&\E\left|\int_t^T\dis\int_{-d}^0g(s-\theta)p^{n}(s-\theta)\mu_1(d\theta) ds - \int_t^T\dis\int_{-d}^0g(s-\theta) \bar{p}(s-\theta)\mu_1(d\theta) ds \right|^2 \\
&=\E\left|\dis\int_{-d}^0\int_t^Tg(s-\theta) \left(p^{n}(s-\theta)- \bar{p}(s-\theta)\right) ds \mu_1(d\theta) \right|^2 \\
&\leq  |g|^2_{\Lcal^\infty[0,T]}\E\left(\dis\int_{-d}^0\int_t^T\vert p^{n}(s-\theta)- \bar{p}(s-\theta)\vert ds\vert  \mu_1\vert(d\theta) \right)^2 \\
&\leq  |g|^2_{\Lcal^\infty[0,T]}\E\left(\dis\int_{-d}^0\int_{(t+\theta)\vee 0}^{T+\theta}\vert p^{n}(\sigma)- \bar{p}(\sigma)\vert d\sigma\vert \mu_1\vert(d\theta)\right)^2 \\
&\leq  |g|^2_{\Lcal^\infty[0,T]}\E\left(\dis\int_{-d}^0\int_{ 0}^{T}\vert p^{n}(\sigma)- \bar{p}(\sigma)\vert d\sigma\vert   \mu_1\vert(d\theta) \right)^2 \\
&\leq  |g|^2_{\Lcal^\infty[0,T]}\E\left(\dis\int_{-d}^0 \vert\mu_1\vert(d\theta)\int_{ 0}^{T}\vert p^{n}(\sigma)- \bar{p}(\sigma)\vert d\sigma  \right)^2 \\
&\leq  |g|^2_{\Lcal^\infty[0,T]}\left(\dis\int_{-d}^0 \vert\mu_1\vert(d\theta)\right)^2\E\left(\int_{ 0}^{T}\vert p^{n}(\sigma)- \bar{p}(\sigma)\vert d\sigma  \right)^2 \\
&\leq T |g|^2_{\Lcal^\infty[0,T]}\vert\mu_1\vert([-d,0])^2 \E \int_0^T |p^n(r) - \bar{p}(r)|^2 \, dr .
\end{align*}
So
\begin{align*}
\E&\left|\int_t^T\dis\int_{-d}^0 q^{n}(s-\theta)h(s-\theta)
\mu_2(d\theta) ds - \int_t^T\dis\int_{-d}^0 \bar{q}(s-\theta)h(s-\theta)\mu_2(d\theta) ds \right|^2 \\
&\leq T |h|^2_{\Lcal^\infty[0,T]}\vert\mu_2\vert([-d,0])^2 \E \int_0^T |q^{n}(r) - \bar{q}(r)|^2 \, dr 
\end{align*}
By passing to the limit as $n\rightarrow \infty$ in equation (\ref{ABSDEtrascinata-approx-mu}) we get
\begin{equation*}
 \left\lbrace\begin{array}{l}
\E^{\Fcal_t}\bar p(t)=\E^{\Fcal_t}\dis\int_t^Tf(s)ds+
\E^{\Fcal_t}\dis\int_t^T\dis\int_{-d}^0g(s-\theta)\bar p(s-\theta)\mu_1(d\theta) ds \\
\quad\quad \quad\quad +
\E^{\Fcal_t}\dis\int_t^T \dis\int_{-d}^0\bar q(s-\theta)h(s-\theta)\mu_2(d\theta) ds \\ \qquad\qquad +
\displaystyle \E^{\Fcal_t}\left[\int_{t\vee(T-d)}^T\xi(s) \bar\mu(ds)
+ \mu({T})\xi_T\right],  \\ \\
\bar p(T-\theta), \; \bar q(T-\theta)=0 \quad \forall \,\theta \in [-d,0 ).
 \end{array}
 \right.
 \end{equation*}
It follows immediately that $(\bar p, \bar q)=(p,q)$ and this concludes the proof.
 \end{proof}

 \section{The controlled problem and the stochastic maximum principle}
 \label{Sec:contr}
Let us consider the following controlled state equation in $\R^n$ 
 \begin{equation}\label{eq:state:fin}
\begin{system}
dx(t) = f(t,x_t,u_t)\, dt + g( t,x_t,u_t)dW(t), \qquad t \in [0,T],\\
x(\theta) = \bar x(\theta), \quad   u(\theta) = \eta (\theta), \qquad \theta \in [-d, 0],
\end{system}
\end{equation}
where $W$ in this section, for simplicity of notation, will be supposed to be a real  standard Brownian motion, and $x_{t}$ and  $u_{t}$ denote the past trajectories from time $t-d$ up to time $t$.
Moreover $\bar x$  and $ \eta$ are  the initial paths of the state and of the control respectively, and we assume $\eta$ to be deterministic and such that $\int_{-d}^0 \eta ^2 (t) \, dt < +\infty$.
By admissible control we mean an $\Fcal_t$-progressively measurable process with values in a convex set  $U\subset\R^k$. 
\begin{equation}\label{eq:admissible_control}
\E \int^T_{-d}\abs{u(t)}^2 \, dt < \infty,
\end{equation}
such that $ u(\theta)= \eta(\theta), \ \mathbb{P}-\text{a.s. for a.e. } \theta\in [-d,0]$. We will denote this space of admissible controls by $\mathcal{U}$.

We want to minimize the following cost functional 
\begin{equation}\label{costo:fin}
J(u(\cdot)) = \E \int_0^T l(t,x_t, u_t) dt + \E\, h(x_T) 
\end{equation}
over all admissible controls.
We make the following assumptions on $f,\,g,\,l,\,h$ and on the initial condition $\bar x$.
Here and in the following we denote by $E=C_b([-d,0],\R^n)$ and $K=C_b([-d,0],\R^k)$.

\begin{hypothesis}\label{ip:findim} 
Let $\mu_i,\, i=1,...,6$ and $\mu$ be finite regular measures. 
\newline We assume that $f$, $g$, $l $ and $ h$ are defined for any $x \in E$ and any $u \in K$ in terms of  $\bar{f}:\Omega\times [0,T]\times  \R^{n}\times \R^{k} \rightarrow \R^n,\,\bar{g}:\Omega\times[0,T]\times \R^{n}\times \R^{k}\rightarrow \R^n$,  $\bar{l}:[0,T]\times \R^{n}\times \R^{k} \rightarrow \R$  and $\bar{h}:\R^{n}\rightarrow \R$
as follows 
 $$ f(t,x,u)=\bar f (t, \int_{-d}^0 x(\theta )\mu_1(d\theta),  \int_{-d}^0 u(\theta )\mu_3(d\theta)),$$
  $$ g(t,x,u)=\bar g (t, \int_{-d}^0 x(\theta )\mu_2(d\theta),  \int_{-d}^0 u(\theta )\mu_4(d\theta)),$$
  $$ l(t,x,u)=\bar l (t,  \int_{-d}^0 x(\theta )\mu_5(d\theta),  \int_{-d}^0 u(\theta )\mu_6(d\theta)),$$
  $$ h(x)=\bar h \left( \int_{-d}^0 x(\theta )\mu(d\theta)\right).$$
Here and in the following in the drift $f$, and correspondingly in $\bar f$, and in the diffusion term $g$, and correspondingly in $\bar g$,  we omit the dependence on $\omega$. We assume that for each $\Fcal_t, \, t\in [0,T]$-adapted processes, $x,u \in \Lcal_\Fcal^2(\Omega, B([0,T;\mathbb{R}^k]) $, the processes $\bar f (\cdot, x_\cdot, u_\cdot),\,\bar g (\cdot, x_\cdot, u_\cdot)$ are also  $\Fcal_t, \, t\in [0,T]$-adapted.
.
\newline We will assume that  $\bar{f}, \,\bar{g}$ and $\bar{l}$ are Borel measurable and
differentiable with respect to the second and to the third variable, that with an abuse of notation we still refer to as $x$ and $u$. 
Moreover $\bar{f}_x, \bar{f}_u,\, \bar{g}_x$ and $ \bar{g}_u$ are uniformly bounded, while $\bar{l}_x, \,\bar{l}_u$ have linear growth with respect to $x$ and $u$, uniformly in $t$, finally $\bar{h}$  is differentiable and $\bar{h}_x$ has linear growth too.
Moreover we will use the following notations
$$\bar f_x (t, \int_{-d}^0 x(\theta )\mu_1(d\theta),  \int_{-d}^0 u(\theta )\mu_3(d\theta))=:\bar f_x (t,x,u),$$
$$\bar f_u (t, \int_{-d}^0 x(\theta )\mu_1(d\theta),  \int_{-d}^0 u(\theta )\mu_3(d\theta))=:\bar f_u (t,x,u)$$
  $$ \bar g_x (t, \int_{-d}^0 x(\theta )\mu_2(d\theta),  \int_{-d}^0 u(\theta )\mu_4(d\theta))=:\bar g_x (t,x,u),$$ $$\bar g_u (t, \int_{-d}^0 x(\theta )\mu_2(d\theta),  \int_{-d}^0 u(\theta )\mu_4(d\theta)=:\bar g_u (t,x,u)$$
  $$\bar l_x (t,  \int_{-d}^0 x(\theta )\mu_5(d\theta),  \int_{-d}^0 u(\theta )\mu_6(d\theta))=:\bar l_x (t,x,u),$$ $$\bar l_u (t,  \int_{-d}^0 x(\theta )\mu_5(d\theta),  \int_{-d}^0 u(\theta )\mu_6(d\theta))=:\bar l_u (t,x,u)$$
  $$\bar h_x \left( \int_{-d}^0 x(\theta )\mu(d\theta)\right)=:\bar h_x (x).$$
\end{hypothesis}
\begin{remark}\label{rm-vectorvalued}
In analogy to Remark \ref{rm-vectorvalued1}, all the results in this Section and throughout the paper can be extended to measures $\mu_i,\, i=1,...6,\, \mu$ considered in Hypothesis \ref{ip:findim}  possibly vector valued finite regular measures with values respectively in $\R^{j_i},\, i=1,...,6$, $\R^j$, with $j_i,j\geq 1,\,i=1,..,6$. 
\end{remark}
We notice that when the coefficients are stochastic, under Hypothesis \ref{ip:findim} existence and uniqueness of a solution of equation \eqref{eq:state:fin} holds true, see e.g. \cite{mohammed1998stochastic}, Theorem I.1

We notice that the terms 
\begin{equation*}
 \int_{-d}^0 u(\theta )\mu_3(d\theta),\quad  \int_{-d}^0 u(\theta )\mu_4(d\theta),\quad \int_{-d}^0 u(\theta )\mu_6(d\theta)
\end{equation*}
appearing respectively in the drift $f$, in the diffusion $g$ and in the current cost $l$ do not make sense in a standard way and for every $t\in[0,T]$ as soon as the control $u$ is not  assumed to be integrable  with respect to the measures $\mu_3, \mu_4, \mu_6 $, but only square integrable with respect to the Lebesgue measure in $[-d,0]$. 
\newline So it is necessary to give a precise meaning to the state equation and to the current cost.
 First of all we want to clarify that for any $ u \in \mathcal{U}$  equation \eqref{eq:state:fin} is well defined, 
 indeed for any $ u \in \mathcal {U}$ and any finite regular measure $\tilde\mu$ we have that:
 \begin{align*}
 \E \int_0^T \int_{-d}^0 |u(t+\theta)|^2 |\tilde\mu| (d\theta) \, dt  \leq |\tilde\mu| ([-d,0])  \E \int_{-d}^{T} |u(\rho)|^2  \, d\rho  
< +\infty
 \end{align*} 
thus 
\begin{equation*}
\int_{-d}^0 |u(t+\theta)|^2 |\tilde\mu|   (d\theta) 
< +\infty, \quad \text{ a.s. for a.e.    } t \in [0,T].
\end{equation*}
Then we can deduce that, thanks to Hypothesis \ref{ip:findim}, for all $x \in \mathcal{S}^p_{\mathcal F}([-d,T])$  and $u \in \mathcal{U}$ the processes are square integrable: $f(t,x_t,u_t) \in \Lcal^2_{\mathcal{F}}(\Omega\times [0,T]; \R^n), \,g(t,x_t,u_t) \in    \Lcal^2_{\mathcal{F}}(\Omega\times [0,T]; \R^{n})$.
In a similar way it follows that the current cost is well defined.
\newline Moreover for any $u \in \mathcal {U}$, there exists a solution $ x= x^u \in \mathcal{S}^p_{\mathcal F}([-d,T]) $: the result follows in the same way as for controlled stochastic delay equations without delay in the control, and it is substantially cointained e.g. in \cite{mohammed1998stochastic}, where stochastic delay equations with random drift and diffusion are solved.

Next we want to show that the adjoint equation of a delay equation is of the form of ABSDE (\ref{ABSDEtrascinata}), and it allows to formulate a stochastic maximum principle for finite dimensional controlled state equations with delay, and in the case of final cost functional depending on the history of the process.
\newline Many recent papers, see e.g. \cite{ChenWuAutomatica2010}, \cite{ChenWuYu2012}, deal with similar problems, but only in the simpler case of final cost functionals not depending on the past of the process. Moreover only the case of pointwise delay is considered, or in some cases the past affects the system at time $t$ by terms of the form
\[
\int_{-d}^0e^{-\lambda \theta }\xi(t+\theta)\,d\theta
\]
where $\xi$ may coincide with the state $x$ of the system, and/or with the control $u$.
These two choices coincide respectively with taking the measures $\mu_i,\,i=1,...,6$ delta Dirac measures and measures absolutely continuous with respect to the Lebesgue measure and with exponential density.
\newline In the present paper we are able to handle $\mu_i,\,i=1,..., 6$ finite regular measures on $[-d,0]$: such a general case is treated in the paper \cite{hu1990maximum}, only in the case without delay in the control and it is here proved by means of anticipated BSDEs.
 
In order to write the adjoint equation, at first we study the variation of the state: let us consider the pair $(x,u)$, where $x$ is solution to equation (\ref{eq:state:fin}) and $u$ is the control process in this equation, and let $v \in \mathcal{U}$ be another admissible control; set $ \bar v= v-\bar u$ and
\begin{equation}\label{urho}
u^\rho=\bar u+\rho \bar v.
\end{equation}
Also $u^\rho$ turns out to be an admissible control. Let $x^\rho$ be the solution of equation (\ref{eq:state:fin})
corresponding to the admissible control $u^\rho$ and let $y$ be the solution of the following linear equation
\begin{equation}\label{eq:state:fin:variation}
\begin{system}
dy(t) = \displaystyle \int_{-d}^0 \bar{f}_x(t,\bar x_t,\bar u_t)y_t (\theta) \mu_1(d\theta)\,dt +  \int_{-d}^0\bar{f}_u(t,\bar x_t,\bar u_t)\bar v_t (\theta) \mu_3(d\theta)\,dt+
 \\ \qquad\quad +  \displaystyle \int_{-d}^0 \bar{g}_x(t,\bar x_t,\bar u_t)y_t (\theta) \mu_2(d\theta)\,dW_t +  \int_{-d}^0 \bar{g}_u(t,\bar x_t,\bar u_t)\bar v_t (\theta) \mu_4(d\theta)\,d W_t\\ \\
y(\theta) = 0,\,\forall\, \theta\in [-d,0].
\end{system}
\end{equation}
 With an immediate extension of Theorem 3.2 in \cite{hu1996maximum} to the case with delay in the control, we have the following first order expansion
\begin{equation}\label{resto:fin:rho}
 x^\rho(t)=\bar x(t)+\rho y(t)+R^\rho(t),\, t\in[0,T],\quad \lim_{\rho\to 0}\frac{1}{\rho^2}\E\sup_{t\in[0,T]}\vert R^\rho(t)\vert^2=0.
\end{equation}
We are going to prove that equation (\ref{ABSDEtrascinata}) with
\begin{align}\label{fghxi-espl}
&f(t)=\E^{\Fcal_t}\dis\int_{-d}^0{\bar l_x(t-\theta, \bar x(t-\theta)\bar u(t-\theta))}(d\theta),
\\ \nonumber & g(t)=\bar{f}_x(t, \bar x_t,\bar u_t),\quad  h(t)= \bar{g}_x(t, \bar x_t,\bar u_t), \quad\xi(t)= h_x(\bar x_T) .
\end{align}
is the adjoint equation in the control problem with cost functional (\ref{costo:fin}).
We notice with the coefficients given by \eqref{fghxi-espl} the BSDE \eqref{ABSDEtrascinata} is solvable by Theorem \ref{proposition:ABSDEtrascinata} since Hypothesis \ref{hyp:ABSDEtrascinata} is satisfied.
\newline To prove that \eqref{ABSDEtrascinata} is the adjoint equation, for a.a. $\tau \in[0,T]$, $ x\in E, \ u \in  L^2([-d,T];U),\, p, q \in \R^n$, we define the Hamiltonian function as 
\begin{equation}\label{hamiltonian:fin}
\begin{split}
\mathcal{H}(t,x,u,p,q) &=
\bar{f}\left(\tau,\int_{-d}^0x(\theta)\mu_1(d\theta), \int_{-d}^0u(\theta)\mu_3(d\theta)\right)p\\
&+\bar{g}\left(\tau,\int_{-d}^0x(\theta)\mu_2(d\theta),\int_{-d}^0u(\theta)\mu_4(d\theta)\right)q\\&+ \bar{l}\left(\tau,\int_{-d}^0x(\theta)\mu_5(d\theta),\int_{-d}^0u(\theta)\mu_6(d\theta)\right)
\\
&=\bar{f}\left(\tau,x, u\right)p+\bar{g}\left(\tau,x, u\right)q +
l(\tau,x,u),
\end{split}
\end{equation}
where the last expression will be used, with an abuse of notation, to shorten the formulas.
Notice that the Hamiltonian function is not defined for every $\tau$, as discussed at the beginning of this Section, due to the fact that $\bar f$,  $\bar g$ and $\bar l$ depend respectively on the terms $\displaystyle\int_{-d}^0u(t-\theta+\eta)\mu_3(d\eta)$, $\displaystyle\int_{-d}^0u(t-\theta+\eta)\mu_4(d\eta)$ and $\displaystyle\int_{-d}^0u(t-\theta+\eta)\mu_6(d\eta)$. 
\newline  The Hamiltonian function turns out to be a $p$ -integrable function in time for any $p\geq 1 $, and so for any function $v\in L^q([0,T])$ the integral 
\[
\int_0^T\mathcal{H}(t,x,u,p,q)v(t)\,dt, \quad t \in [0,T], \ x \in E, u \in L^2([-d,T];U), p,q \in \R^n.
\]
makes sense, and this integral appears in the proof of the stochastic maximum principle, see the next Theorem on the stochastic maximum principle.

In the formulation of the stochastic maximum principle, the adjoint ABSDE turns out to be nothing else than equation \eqref{ABSDEtrascinata}, with  with $f,g,h$ and $\xi$ given in (\ref{fghxi-espl}).

\begin{theorem}\label{maxprinc-findim}
Let Hypothesis \ref{ip:findim} holds true. Let $(p,q)$ be the unique solution of the ABSDE
 \begin{equation}\label{ABSDEtrascinata-aggiunta-mu}
  \left\lbrace\begin{array}{l}
p(t)=\dis\int_t^T \E^{\Fcal_s}\dis\int_{-d}^0{\bar l_x\left(s-\theta , \bar x(s-\theta),\bar u(s-\theta)\right)}
\mu_5(d\theta)\,ds\\
\qquad+\dis\int_t^T\E^{\Fcal_s}\dis\int_{-d}^0 p(s-\theta) 
\bar f_x\left(s-\theta, \bar x_{s-\theta},\bar u_{s-\theta}\right)\mu_1(d\theta)\, ds\\
\qquad+\dis\int_t^T
\E^{\Fcal_s}\dis\int_{-d}^0q(s-\theta)\bar g_x\left(s-\theta,\bar  x_{s-\theta},\bar u_{s-\theta}\right)
\mu_2(d\theta)\, ds \\
\qquad
+\dis\int_t^Tq(s)dW_s+\dis\int_{t\vee(T-d)}^T\textcolor{blue}{\E^{\Fcal_s}}\bar  h_x(\bar x_T)\textcolor{blue}{\mu^T(ds)}
\\
 \; p(T-\theta)=0, q(T-\theta) =0 \quad \forall \,\theta \in [-d,0 ).
 \end{array}
 \right.
 \end{equation}
Let $(\bar x, \bar u)$ be an optimal pair for the optimal control problem of minimizing the cost functional (\ref{costo:fin}) related
to the controlled state equation (\ref{eq:state:fin}).
Then the following condition holds: 
\begin{align}\label{max-princ:fin:condiz-ham}
 & \< v(t)-\bar{u}(t),\E^{\mathcal{F}_t}\int_{-d}^0\bar{f}_u(t-\theta,\bar x_{t-\theta},\bar u_{t-\theta})p(t-\theta)\mu_3(d\theta) \> \nonumber\\  &+ 
    \< v(t)-\bar{u}(t),\E^{\mathcal{F}_t} \int_{-d}^0\bar{g}_u(t-\theta,\bar x_{t-\theta},\bar u_{t-\theta})q(t-\theta)\mu_4(d\theta) \> 
  \\ & +   \< v(t)-\bar{u}(t), \E^{\mathcal{F}_t}\int_{-d}^0\bar  l_u\left(t-\theta, \bar x_{t-\theta},\bar u_{t-\theta}\right) \mu_6(d\theta)  \>   \geq 0  \qquad\quad  dt\times \P- a.e.; \nonumber
\end{align}
for all $ v \in \mathcal{U}$.
\end{theorem}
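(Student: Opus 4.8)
The plan is to derive (\ref{max-princ:fin:condiz-ham}) by combining the convexity of $\mathcal U$, the first order expansion (\ref{resto:fin:rho}), and a duality identity between the variation equation (\ref{eq:state:fin:variation}) and the adjoint ABSDE (\ref{ABSDEtrascinata-aggiunta-mu}). Since $(\bar x,\bar u)$ is optimal and $u^\rho=\bar u+\rho\bar v\in\mathcal U$ for $\rho\in[0,1]$, the map $\rho\mapsto J(u^\rho)$ has nonnegative right derivative at $\rho=0$. Using (\ref{resto:fin:rho}), the linear growth of $\bar l_x,\bar l_u,\bar h_x$ and dominated convergence to differentiate under the expectation, I would first obtain the variational inequality
\[
\E\int_0^T\Big[\bar l_x(t,\bar x_t,\bar u_t)\dis\int_{-d}^0 y_t(\theta)\mu_5(d\theta)+\bar l_u(t,\bar x_t,\bar u_t)\dis\int_{-d}^0\bar v_t(\theta)\mu_6(d\theta)\Big]dt+\E\Big[\bar h_x(\bar x_T)\dis\int_{-d}^0 y_T(\theta)\mu(d\theta)\Big]\geq 0.
\]

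The core of the argument is to eliminate the implicit dependence on $y$ in this inequality in favour of the explicit control perturbation $\bar v$. Because (\ref{ABSDEtrascinata-aggiunta-mu}) has no differential form, I would replace $(p,q)$ by the regular approximations $(p^n,q^n)$ of Proposition \ref{prop:approxABSDEtrascinata}, which solve (\ref{ABSDEtrascinata-approx-mu}) and do possess a differential form, and apply It\^o's formula to $\langle p^n(t),y(t)\rangle$ on $[0,T]$. Taking expectations removes the stochastic integral, and the crucial step is a change of variables $r=t+\theta$ together with Fubini: exploiting $y(\theta)=0$ for $\theta\leq 0$ and $p^n(s)=q^n(s)=0$ for $s>T$, the anticipating kernels of the adjoint weighted by $\mu_1$ (respectively $\mu_2$) cancel exactly the state dependent parts of the drift (respectively diffusion) of $y$. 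The cancellation works because after the substitution the two integration windows $[0,T]$ and $[0,T+\theta]$ differ only on $(T+\theta,T]$, where $p^n(\cdot-\theta)$ and $q^n(\cdot-\theta)$ vanish.

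After this cancellation I expect to be left with the identity
\[
\E\int_0^T\langle f(t),y(t)\rangle dt+\mu(\{T\})\E\langle\bar h_x(\bar x_T),y(T)\rangle+\E\int_{T-d}^T\langle\bar h_x(\bar x_T),y(t)\rangle\bar\mu_n(dt)=\E\int_0^T\langle p^n(t),A_v(t)\rangle dt+\E\int_0^T\langle q^n(t),B_v(t)\rangle dt,
\]
where $A_v(t)=\int_{-d}^0\bar f_u\,\bar v_t(\theta)\mu_3(d\theta)$ and $B_v(t)=\int_{-d}^0\bar g_u\,\bar v_t(\theta)\mu_4(d\theta)$ are the control perturbation terms of (\ref{eq:state:fin:variation}), and $f$ is the running cost datum of (\ref{fghxi-espl}). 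Letting $n\to\infty$, Proposition \ref{prop:approxABSDEtrascinata} gives $(p^n,q^n)\to(p,q)$ in $L^2$, while the $w^*$ convergence $\bar\mu_n\rightharpoonup\bar\mu$ tested against the continuous process $t\mapsto y(t)$, combined with (\ref{bar-mu}), turns the terminal terms into $\E\langle\bar h_x(\bar x_T),\int_{-d}^0 y_T(\theta)\mu(d\theta)\rangle$; the same substitution identifies $\E\int_0^T\langle f(t),y(t)\rangle dt$ with $\E\int_0^T\langle\bar l_x(t,\bar x_t,\bar u_t),\int_{-d}^0 y_t(\theta)\mu_5(d\theta)\rangle dt$. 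Substituting back into the variational inequality, the whole $y$ dependence is replaced by $\bar v$, yielding $\E\int_0^T[\langle p(t),A_v(t)\rangle+\langle q(t),B_v(t)\rangle+\bar l_u\int_{-d}^0\bar v_t(\theta)\mu_6(d\theta)]dt\geq 0$.

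Finally, a last change of variables $r=t+\theta$ and Fubini move each delay off $\bar v$ and onto the adjoint data; inserting $\E^{\Fcal_r}$, which is legitimate because $\bar v(r)=v(r)-\bar u(r)$ is $\Fcal_r$ measurable, rewrites the inequality as $\E\int_0^T\langle v(t)-\bar u(t),\Phi(t)\rangle dt\geq 0$ with $\Phi$ the bracket appearing in (\ref{max-princ:fin:condiz-ham}). Since this holds for every $v\in\mathcal U$, a localization argument (taking $v=\bar u+\mathbf 1_E(w-\bar u)$ with $E$ progressively measurable and $w\in U$, then using separability of $U$ to pass to all $w$ simultaneously) upgrades the integral inequality to the pointwise condition (\ref{max-princ:fin:condiz-ham}), $dt\times\P$ a.s. The main obstacle is the duality step: one must justify It\^o's formula only on the regular approximations, track the support conditions so that the anticipating kernels annihilate the delayed state terms exactly, and control the singular measure $\mu$ through the limit $\bar\mu_n\rightharpoonup\bar\mu$, where the continuity of $y$ and the estimates of Lemma \ref{lemma:BSDEtrascinata} are essential.
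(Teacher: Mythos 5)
Your proposal is correct and follows essentially the same route as the paper: the variational inequality from optimality and the first-order expansion, the decomposition $\mu=\bar\mu+\mu(\{0\})\delta_0$ with absolutely continuous $w^*$-approximations $\bar\mu^n$ so that the adjoint equation admits a differential form, It\^o duality between $y$ and $(p^n,q^n)$ with the change of variables $r=t+\theta$ and the support conditions producing the cancellation, and finally the passage to the limit and localization. The only point where you are more explicit than the paper is the last localization step upgrading the integral inequality to the pointwise condition, which the paper leaves implicit.
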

\begin{remark}
We notice that in  equation \eqref{ABSDEtrascinata-aggiunta-mu} and in condition \eqref{max-princ:fin:condiz-ham}  the terms  
$\E^{\mathcal{F}_t}\int_{-d}^0\bar{g}_x(t-\theta,x_{t-\theta}, u_{t-\theta})q(t-\theta)\mu_2(d\theta)$  and
$\E^{\mathcal{F}_t}\int_{-d}^0\bar{g}_u(t-\theta,x_{t-\theta}, u_{t-\theta})q(t-\theta)\mu_4(d\theta)$ 
make sense only when integrated iwith respect to $t$  as we already pointed out for the control terms.  
\newline As it is well known, the stochastic maximum principle can be reformulated without differentiability assumptions on the coefficients as stated in Hypothesis \ref{ip:findim}. In the place of differentiability, we assume that $\bar f$ and $\bar g$ are Lipschitz continuous with respect to $x,\,u$, $\bar l$ is locally Lipschitz continuous with respect to $x,\,u$, and $\bar l$ is locally Lipschitz continuous.
In this case condition (\ref{max-princ:fin:condiz-ham-var}) can be replaced by a condition on the variation of the Hamiltonian function. 
 Namely let  $v$ be another admissible control, set $\bar v= v-\bar u$ and $u^\rho= \bar u+\rho \bar v$, condition (\ref{max-princ:fin:condiz-ham-var}) can be substituted by
\begin{multline}\label{max-princ:fin:condiz-ham-var}
 \E^{\mathcal{F}_t}\int_{-d}^0\left(\bar{f}(t-\theta, \bar x_{t-\theta}, u^\rho_{t-\theta})-\bar{f}(t-\theta,\bar x_{t-\theta},\bar u_{t-\theta})\right)p(t-\theta)\mu_3(d\theta) \\
  + \E^{\mathcal{F}_t} \int_{-d}^0\left(\bar{g}(t-\theta, \bar x_{t-\theta}, u^\rho_{t-\theta})-\bar{g}(t-\theta,\bar x_{t-\theta},\bar u_{t-\theta})\right)q(t-\theta)\mu_4(d\theta)  \\+ \E^{\mathcal{F}_t}\int_{-d}^0\left(\bar l(t-\theta,  \bar x_{t-\theta}, u^\rho_{t-\theta})-  \bar l(t-\theta,  \bar x_{t-\theta}, \bar u_{t-\theta})\right)\mu_6(d\theta)\geq 0  ,
\end{multline}
$ dt\times d\P-$ a.e.. This form of the maximum principle can be obtained in a similar to the differentiable case, without writing the variation of the coefficients in terms of derivatives.
\newline Finally we notice that, unlike the undelayed case, both conditions \eqref{max-princ:fin:condiz-ham} and \eqref{max-princ:fin:condiz-ham-var}
 cannot be expressed with any derivative or variation of the Hamiltonian.
\end{remark}
\textbf{Proof of Theorem \ref{maxprinc-findim}.} 
As we already pointed out, see the comment after the proof of Theorem \ref{proposition:ABSDEtrascinata}, the adjoint equation  \eqref {ABSDEtrascinata-aggiunta-mu}  is not regular enough to perform directly the usual proof of the maximum principle. Thus during the proof we must introduce some suitable regularized  approximating problem to apply the It\^o formula and  deduce the necessary condition \eqref{max-princ:fin:condiz-ham}.
\newline As usual in proving the stochastic maximum principle, we start by writing the variation of the cost functional. Namely, following (\ref{urho}), let $(\bar x,\bar u)$ be an optimal pair
 and let $v$ be another admissible control, set $\bar v= v-\bar u$
and $u^\rho= \bar u+\rho \bar v$.  
We can write the variation of the cost functional, $$\delta J = J(u^\rho(\cdot))-J(\bar u(\cdot)),$$ as 
\begin{align}\label{costo:fin:variation}
 0&\leq \delta J = J(u^\rho(\cdot))-J(\bar u(\cdot))\\ \nonumber 
 & =  \E \int_0^T    l(t, x^\rho_t, u^\rho_t)dt-
  \E \int_0^T  l(t,\bar x_t,\bar u_t)dt  + \E \left(h(x^\rho_T)- h(\bar x_T)\right)=I_1+I_2. \nonumber
 \end{align}
 Now
 \begin{align*}
I_1 &=\E \int_0^T l(t, x^\rho_t, u^\rho_t)\,dt-
  \E \int_0^T  l(t,\bar x_t,\bar u_t)\,dt\\
  & =\left[\E \int_0^T    l(t, x^\rho_t, u^\rho_t)\,dt-
  \E \int_0^T  l(t,\bar x_t, u^\rho_t)\,dt \right] \\ 
  &+\left[\E \int_0^T    l(t, \bar x_t, u^\rho_t)\,dt-
  \E \int_0^T  l(t,\bar x_t,\bar u_t)\,dt\right]=J_1+J_2 
 \end{align*} 
 We rewrite (\ref{resto:fin:rho}) as
\begin{equation}\label{resto:fin:rho-bar}
x^\rho(t)=\bar x(t)+\rho y(t)+R^\rho(t),\, t\in[0,T],\quad \lim_{\rho\to 0}\frac{1}{\rho^2}\E\sup_{t\in[0,T]}\vert R^\rho(t)\vert^2=0,
 \end{equation}
where $y$ is solution to equation (\ref{eq:state:fin:variation}). We start by computing $J_1$:
  \begin{align}\label{J1}
&J_1=\E \int_0^T   \left[ l\left(t, \int_{-d}^0x^\rho(t+\theta)\mu_5(d\theta) ,\int_{-d}^0u^\rho(t+\theta)\mu_6(d\theta)\right)\right.\\  \nonumber  
&\left.-l\left(t, \int_{-d}^0\bar x(t+\theta)\mu_5(d\theta) ,\int_{-d}^0u^\rho(t+\theta)\mu_6(d\theta)\right)dt \right]\\  \nonumber  
&=  \E \int_0^T \int_0^1 \int_{-d}^0\left(\textcolor{blue}{x^\rho(t+\theta)- \bar x(t+\theta)}\right)\bar{l}_x\left(t,\left(\bar x_t+\lambda\left( { x^\rho_t-\bar x_t}\right)\right), u^\rho_t)\right)\mu_5(d\theta) d\lambda dt \\ \nonumber  
&=\E \int_0^T \int_0^1\int_{-d}^0\left(\rho y_t(\theta) +R^\rho(t+\theta)  \right)  \bar l_x\left(t,\bar x_t+\lambda\left( x^\rho_t-\bar x_t\right), u^\rho_t\right)\mu_5( d\theta)d\lambda dt.  \\ \nonumber
\end{align}
By similar computations we obtain the analogous formula for $J_2$:
 \begin{align}\label{J2}
J_2& =  \E \int_0^T \int_0^1\int_{-d}^0 \left(u^\rho\textcolor{blue}{(t+\theta)}-\bar u\textcolor{blue}{(t+\theta)}  \right)  
 \bar{l}_u\left(t,\bar x_t, u^\rho_t+\lambda\left( u^\rho_t-\bar u _t\right)\right) \mu_6(d\theta)d\lambda dt \\ \nonumber
  & = \E \int_0^T \int_0^1\int_{-d}^0  \rho \bar v(t+\theta) \bar{l}_u\left(t,\bar x_t, u^\rho_t+\lambda\left( u^\rho_t-\bar u _t\right)\right)  \mu_6(d\theta)d\lambda dt .\\ \nonumber  
\end{align}
Notice that the last term is well defined only when $\bar u, u^\rho$ are continuous, i.e. belong to $E$, but can be extended to the whole $L^2([-d,T];U)$ by a standard density argument. 
We now compute $I_2$:
\begin{align}\label{I2}
 I_2&=\E \left(h(x^\rho_T)- h(\bar x_T)\right)\\ \nonumber
  &= \E \int_0^1\int_{-d}^0 \left(x^\rho(T+\theta) -\bar x(T+\theta))\right)
  \bar{ h} _x\left(\bar x_T+\lambda\left(x^\rho_T-\bar x_T \right)\right) \mu (d\theta) \\ \nonumber
  &= \E \int_0^1\int_{-d}^0\left(\rho y(T+\theta) +R^\rho(T+\theta)\right)
 \bar{  h} _x\left(\bar x_T+\lambda\left(x^\rho_T-\bar x_T \right)\right) \mu (d\theta)\\ \nonumber
& = \E \int_0^1\int_{-d}^0\left(\rho y(T+\theta) +R^\rho(T+\theta)\right) \bar{ h}_x\left(\bar x_T+\lambda\left(x^\rho_T-\bar x_T \right)\right)
   \mu(d\theta). \nonumber
  \end{align}
 Now we follow Lemma \ref{lemma:approx-measure} and we decompose the measure $\mu$ into
 \begin{equation}\label{dec:mu_h}
 \mu= \bar \mu + \mu(\left\lbrace 0\right\rbrace)\delta_0
 \end{equation}
so that $\bar\mu$ turns out to be a finite regular measure on $[-d,0]$, such that $\bar \mu (\left\lbrace 0 \right\rbrace)=0$.
 By Lemma \ref{lemma:approx-measure} there exists a sequence $(\bar\mu^{n})_{n\geq 1}$ of finite regular measures on $[-d,0]$, absolutely continuous with respect to
 $\lambda_{[-d,0]}$, the Lebesgue measure on $[-d,0]$, such that
 \begin{equation}\label{approx:barmu}
  \bar\mu=\lim_{n\rightarrow\infty}\bar \mu^{n}.
 \end{equation}
 So following (\ref{I2}), the variation of the final cost can be written as
 
 \begin{align}\label{I2:approx}
 I_2&= \lim_{n\rightarrow \infty} \E \int_0^1\int_{-d}^0 \left(\rho y_T(\theta) +R^\rho(T+\theta)\right) \bar h_x\left(\bar x_T+\lambda\left(x^\rho_T-\bar x_T \right)\right)
  d\bar\mu_{n}(\theta)d\lambda  \\ \nonumber
  &+\E \int_0^1\left(\rho y_T(\theta) +R^\rho(T+\theta)\right) \bar h_x\left(\bar x_T+\lambda\left(x^\rho_T-\bar x_T \right)\right)
  \mu(\left\lbrace 0\right\rbrace)d\lambda 
  \end{align}
  So taking into account the computation for $J_1$, $J_2$ and for $I_2$ that we have performed in (\ref{J1}), (\ref{J2}), (\ref{I2})
  and (\ref{I2:approx}), also by dividing both sides of (\ref{costo:fin:variation}) by $\rho$, and then by
  letting $\rho\to 0$ on the right hand side, we get
 \begin{align}\label{costo:fin:variation:approx}
 0&\leq  \E \int_0^T \int_{-d}^0 \textcolor{blue}{y(t+\theta)}\bar  l_x(t, \bar x_t,\bar u_t)\mu_5(d\theta)dt +
 \E \int_0^T   \int_{-d}^0 \bar{l}_u(t, \bar x_t,\bar u_t)\textcolor{blue}{\bar v(t+\theta)}  \mu_6(d\theta)dt \\ \nonumber
&+ \mu_h(\left\lbrace 0\right\rbrace)\bar h_x(\bar x_T)y_T(0)
+ \lim_{n\rightarrow \infty}\E \int_{-d}^0\textcolor{blue}{y(T+\theta)} \bar h_x(\bar x_T)d\bar\mu_{n}(\theta).
 \end{align}
Let $J^n$ the cost obtained from $J$ defined in \eqref{costo:fin} by replacing $\bar\mu $ with its absolute continuous approximation $\bar\mu_n$ in the final cost. So the variation $\delta J^n$ of $J^n$ is given by
 \begin{align}\label{costo:fin:variation:approx-delta}
  \delta J^n&  : =  J^n(u^\rho(\cdot))-J^n(\bar u(\cdot))=  \E \int_0^T \int_{-d}^0  \textcolor{blue}{y_t(\theta)\bar  l_x(t, \bar x_t,\bar u_t})\mu_5(d\theta)dt \\ \nonumber
&+\E \int_0^T  \bar{l}_u(t, \bar x_t,\bar u_t)\bar v_t(\theta) \mu_{6}(d\theta)dt \\ \nonumber
&+ \mu(\left\lbrace 0\right\rbrace)\bar h_x(\bar x_T)y_T(0)
+\E \textcolor{blue}{ \int_{T-d}^Ty(\theta) \bar h_x(\bar x_T)d\bar\mu^{n,T}}(\theta).
 \end{align}
 We notice that in the first term, $y_t(\theta)=y(t+\theta)=0$ if $t+\theta <0$, and the same holds for $\bar{v}$.
 
Since (\ref{ABSDEtrascinata-aggiunta-mu}) does not make sense in differential form and we cannot apply the Ito formula, we now introduce an approximated version of equation (\ref{ABSDEtrascinata-aggiunta-mu}), along the lines we have described at the beginning of the proof.
\newline First we notice that
with $\mu$ decomposed into $\bar\mu$ and $\mu(0)$ as in (\ref{dec:mu_h}), equation (\ref{ABSDEtrascinata-aggiunta-mu}) can be rewritten as
 \begin{equation}\label{ABSDEtrascinata-aggiunta-bar}
  \left\lbrace\begin{array}{l}
p(t)=\dis\int_t^T \E^{\Fcal_s}\dis\int_{-d}^0 {\bar l_x\left(s-\theta, \bar x_{s-\theta},\bar u_{s-\theta }\right)}
\mu_5(d\theta)\,ds\\
\qquad+\dis\int_t^T\E^{\Fcal_s}\dis\int_{-d}^0 p(s-\theta) 
\bar f_x\left(s-\bar \theta, x_{s-\theta},\bar u_{s-\theta}\right)\mu_1(d\theta)\, ds\\
\qquad+\dis\int_t^T
\E^{\Fcal_s}\dis\int_{-d}^0q(s-\theta) \bar g_x\left(s-\theta, \bar x_{s-\theta},\bar u_{s-\theta}\right)
\mu_2(d\theta)\, ds \\
\qquad+\dis\int_t^Tq(s)dW_s+\dis\int_{t\vee(T-d)}^T\textcolor{blue}{\E^{\Fcal_s}}\bar  h_x(\bar x_T)\textcolor{blue}{\bar\mu^T(ds)}
+\mu(\left\lbrace 0\right\rbrace)\bar h_x(\bar x_T)\\
p(T-\theta), \; q(T-\theta)=0 \quad \forall \,\theta \in [-d,0 ).
 \end{array}
 \right.
 \end{equation}
Now we approximate $\bar\mu$  by $\bar\mu^{n}$ as in (\ref{approx:barmu}) in the ABSDE (\ref{ABSDEtrascinata-aggiunta-bar}), and so we obtain an approximated version of (\ref{ABSDEtrascinata-aggiunta-bar}) given by 
\begin{equation}\label{ABSDEtrascinata-aggiunta-approx-mu}
 \left\lbrace\begin{array}{l}
p^n(t)=\dis\int_t^T \E^{\Fcal_s}\dis\int_{-d}^0 
 l_x\left(s-\theta,\bar  x_{s-\theta},\bar u_{s-\theta}\right)
\mu_5(d\theta)\,ds\\
\qquad+\dis\int_t^T\E^{\Fcal_s}\dis\int_{-d}^0 p^n(s-\theta) 
\bar f_x\left(s-\theta, \bar x_{s-\theta},\bar u_{s-\theta}\right)\mu_1(d\theta)\, ds\\
\qquad+\dis\int_t^T
\E^{\Fcal_s}\dis\int_{-d}^0q ^n(s-\theta) \bar g_x\left(s-\theta,\bar  x_{s-\theta},\bar u_{s-\theta}\right)
\mu_2(d\theta)\, ds \\
\qquad
+\dis\int_t^Tq ^n(s)dW_s+\dis\int_{t\vee(T-d)}^T\textcolor{blue}{\E^{\Fcal_s}}\bar  h_x(\bar x_T)\textcolor{blue}{\bar\mu^{n,T}(ds)}
+\mu(\left\lbrace 0\right\rbrace)\bar h_x(\bar x_T)\\
p^n(T-\theta), \; q^n (T-\theta)=0 \quad \forall \,\theta \in [-d,0 ).
 \end{array}
 \right.
  \end{equation}
Since the differential form of $p^n(t)$ makes sense, we can compute $d\< y(t), p^n(t)\>$:
\begin{align*}
 &d\< y(t), p^n(t)\>\\ \nonumber
 &=\<d y(t), p^n(t)\>+\< y(t),d p^n(t)\>+\<\dis\int_{-d}^0 \bar g_x(t, \bar x_t,\bar u_t)\mu_2(d\theta),q^n(t)\>dt  \\ \nonumber&+\<\dis\int_{-d}^0 \bar g_u(t, \bar x_t,\bar u_t)\bar v(t+\theta)\mu_4(d\theta),q^n(t)\>dt\\ \nonumber
 &=\<\dis\int_{-d}^0 y(t+\theta)\bar  f_x(t, \bar x_t,\bar u_t)d\mu_1(\theta)\,dt + \int_{-d}^0  f_u(t,\bar x_t,\bar u_t)\bar v(t+\theta) \mu_6(d\theta)\,dt\\ \nonumber  &+ 
 \dis\int_{-d}^0y(t+\theta) \bar g_x(t,\bar x_t,\bar u_t)d\mu_2(\theta)\,dW(t)\\ \nonumber&+\dis\int_{-d}^0 \bar g_u(t, \bar x_t,\bar u_t)\bar v(t+\theta) \mu_{4}(d\theta)\,dW(t),p^n(t)\>\\ \nonumber
 &-\<y(t),  \E^{\Fcal_t}\dis\int_{-d}^0
\bar l_x\left(t-\theta, \bar x_{t-\theta},\bar u(t-\theta)\right)
\mu_5(d\theta)\>dt\\ \nonumber
&-\<y(t),\E^{\Fcal_t}\dis\int_{-d}^0 p^n(t-\theta)
\bar f_x\left(t-\theta, \bar x_{t-\theta},\bar u_{t-\theta}\right)\mu_1(d\theta)\>dt\\ \nonumber
&-\<y(t),
\E^{\Fcal_t}\dis\int_{-d}^0q^n(t-\theta)\bar g_x\left(t-\theta, \bar x_{t-\theta}, \bar u_{t-\theta})\right)
\mu_2(d\theta) \>dt\\ \nonumber 
&-\<y(t),q(t)dW_t\>+\<y(t),\chi_{t>T-d}\bar  h_x(\bar x_T)\frac{d\textcolor{blue}{\bar\mu^{n,T}}}{dt}dt\>\\ \nonumber 
&-\<\dis\int_{-d}^0 \bar g_x(t,\bar  x_t, \bar u_t)\mu_2(d\theta),q^n(t)\>dt.
\end{align*}
Integrating between $0$ and $T$ and taking expectation we obtain
\begin{align*}
&\E\< y(T), \mu_h(\left\lbrace 0\right\rbrace)\bar h_x(x_T)\>
 =\E\dis\int_0^T\<\dis\int_{-d}^0y(t+\theta) \bar  f_x(t, x_t,u_t)d\mu_1(\theta)  \\ &+  \int_{-d}^0f_u(t,x_t,u_t)\bar v(t+\theta) \mu_{6}(d\theta),p^n(t)\>dt\\ \nonumber
 &-\E\dis\int_0^T\<y(t),\E^{\Fcal_t}\dis\int_{-d}^0 \chi_{t-\theta<T}
\bar l_x\left(t-\theta, x_{t-\theta},u_{t-\theta}\right)
\mu_5(d\theta)\>dt\\ \nonumber
&-\E\dis\int_0^T\<y(t),\E^{\Fcal_t}\dis\int_{-d}^0 p^n(t-\theta) \chi_{t-\theta<T}
\bar f_x\left(t-\theta, x_{t-\theta},u_{t-\theta}\right)\mu_1(d\theta)\>dt\\ \nonumber
&-\E\dis\int_0^T\<y(t),
\E^{\Fcal_t}\dis\int_{-d}^0q^n(t-\theta) \chi_{t-\theta<T}\bar g_x\left(t-\theta, x_{t-\theta},u_{t-\theta}\right)
\mu_2(d\theta)\>dt\\ \nonumber 
&-\E\dis\int_0^T \textcolor{blue}{\E^{\Fcal_t}}\<y(t),\chi_{t>T-d}\bar  h_x(x_T)\textcolor{blue}{\tilde{\bar\mu}^{n,T}}(t)\>dt\\ \nonumber 
&-\E\dis\int_0^T\<\dis\int_{-d}^0 \bar g_x(t, x_t,u_t)d\mu_2(\theta),q^n(t)\>dt
\end{align*}
 where 
 \[
 \textcolor{blue}{ \tilde{\bar\mu}^{n,T}}=\frac{\textcolor{blue}{d\bar\mu^{n,T}}}{dt},
   \]
is the Radon Nikodym derivative of $\bar\mu^{n,T}$ with respect to the Lebesgue measure.
By some change in the time variable and with the optimal pair $(\bar x, \bar u)$ instead of $(x,u)$, it turns out that
\begin{align*}
\delta J^n&=\E\< y(T), \mu(\left\lbrace 0\right\rbrace)\bar h_x(\bar x_T)\>+
\E\dis\int_0^T\textcolor{blue}{\E^{\Fcal_t}}\<y(t),\chi_{t>T-d}\bar  h_x(\bar x_T) \textcolor{blue}{\tilde{ \bar\mu}^{n}(t)}\>dt\\ \nonumber
&+\E\dis\int_0^T\<y(t),\E^{\Fcal_t}\dis\int_{-d}^0 \chi_{t-\theta<T}
\bar l_x\left(t-\theta, \bar x_{t-\theta},\bar u_{t-\theta}\right)
\mu_5(d\theta)\>dt\\ \nonumber
&+\E \int_0^T  l_u(t, \bar x_t,\bar u_t) \int_{-d}^0 \bar v(t+\theta) \mu_6(d\theta)dt\\ \nonumber
 &=\E\dis\int_0^T\<\int_{-d}^0\bar{f}_u(t,\bar x_t,\bar u_t)\bar v(t+\theta) \mu_3(d\theta),p^n(t)\>dt\\ \nonumber
&+ \E\dis\int_0^T\<\int_{-d}^0\bar{g}_u(t,\bar x_t,\bar u_t)\bar v(t+\theta) \mu_4(d\theta),q^n(t)\>dt \\ &+
 \E \int_0^T  \int_{-d}^0 l_u(t, \bar x_t,\bar u_t)  \bar v(t+\theta) \mu_6(d\theta)dt. \nonumber
 \end{align*}
So, taking into account (\ref{costo:fin:variation:approx})
\begin{align*}
0&\leq\E\dis\int_0^T\<\int_{-d}^0\bar{f}_u(t,\bar x_t,\bar u_t)\bar v(t+\theta) \mu_3(d\theta),p^n(t)\>dt\\ \nonumber
& + \E\dis\int_0^T\<\int_{-d}^0\bar{g}_u(t,\bar x_t,\bar u_t)\bar v(t+\theta) \mu_4(d\theta),q^n(t)\>dt\\ & +\E \int_0^T  \int_{-d}^0  l_u(t, \bar x_t,\bar u_t)\bar v(t+\theta) \mu_6(d\theta)dt
\end{align*}
and letting $n\rightarrow \infty$ we get
\begin{align}\label{max-princ:fin:condiz-ham-diffle}
&0\leq\E\dis\int_0^T\<\int_{-d}^0\bar{f}_u(t,\bar x_t,\bar u_t)\bar v_t(\theta) \mu_3(d\theta),p(t)\>dt \\ \nonumber
&+ \E\dis\int_0^T\<\int_{-d}^0\bar{g}_u(t,\bar x_t,\bar u_t)\bar v_t(\theta) \mu_4(d\theta),q(t)\>dt\\ \nonumber&\quad +\E \int_0^T  \int_{-d}^0 l_u(t, \bar x_t,\bar u_t)\bar v(t+\theta )\mu_6(d\theta) dt
\\ \nonumber& =  \E\dis\int_0^T\< \bar v(t) , \E^{\mathcal{F}_t}\int_{-d}^0\bar{f}_u(t-\theta,\bar x_{t-\theta},\bar u_{t-\theta})p(t-\theta)\mu_3(d\theta) \>dt \\ \nonumber
&\quad+ \E\dis\int_0^T\< \bar v(t ), \E^{\mathcal{F}_t} \int_{-d}^0\bar{g}_u(t-\theta,\bar x_{t-\theta},\bar u_{t-\theta})q(t-\theta)\mu_4(d\theta) \>dt\\ \nonumber
&\quad +\E \int_0^T \< \bar v(t ), \E^{\mathcal{F}_t} \int_{-d}^0  l_u(t-\theta, \bar x_{t-\theta},\bar u_{t-\theta})\bar v(t) \mu_6(d\theta) \>dt 
\end{align}
which is nothing else than (\ref{max-princ:fin:condiz-ham}) in integral form. The conclusion follows by a standard localization procedure, along the lines given e.g. in \cite{OrrRocSca2020}, end of paragraph 5.4, see also \cite{hu1996maximum}, end of the proof of Theorem 5.1, and \cite{Yo99}. 
\qed

\section{Delay equations arising in advertising models}
\label{sec:opt-adv}
We consider a stochastic dynamic model in marketing for problems of optimal advertising. We study, as done in \cite{GozMar} and in \cite{GozMarSav}, stochastic models for optimal advertising starting from the stochastic variant introduced in \cite{GrossVisc}, and also with delay both in the state and in the control, see also \cite{Hartl}. In this model delay in the control corresponds to lags in the effect of advertisement.
\newline So we consider, for $t\in[0,T]$, the following controlled stochastic differential equation in $\R$ with delay in the state and in the  control:
\begin{equation}
\left\{
\begin{array}
[c]{l}%
dy(t)  =\left[a_0 y(t) +\displaystyle\int_{-d}^0y(t+\theta)\mu_a(d\theta)+b_0 u(t) +\displaystyle\int_{-d}^0u(t+\theta)\mu_b(d\theta)\right]\,dt\\ \qquad\qquad +\sigma_a y(t)dW_t+\sigma_b u(t)dW_t
, \\
y(\theta)  =y_0(\theta), \quad \theta \in [-d,0),\\
u(\theta)=u_0(\theta), \quad \theta \in [-d,0).
\end{array}
\right.  \label{eq-contr-rit}
\end{equation}
In equation \eqref{eq-contr-rit}, $y$ represents the goodwill level, $a_0$ is a constant factor of image deterioration in absence of advertising, $b_0$ is a constant representing an advertising effectiveness factor, $\mu_a(\cdot)$ is the distribution of the forgetting time, and $\mu_b(\cdot)$ is the distribution of the time lag between the advertising expenditure $u$ and the corresponding effect on the goodwill level. The diffusion term $\sigma_a y(t)$ accounts for the word of mouth communication, the parameter $\sigma _a$ is the advertising volatility; the diffusion term, $\sigma_b u(t)$ accounts for the effect of advertising, the parameter $\sigma_b$ is the communication effectiveness volatility. Moreover, $y_0(0)$ is the level of goodwill at the beginning of the advertising campaign, while $y_0(\cdot)$ is the history  of the goodwill level before the initial time, and $u_0(\cdot)$ is the history of the advertising expenditure before the initial time, too.
\newline We assume the following:
\begin{hypothesis}\label{ipotesibasic}
\begin{itemize}
\item[(i)] $W$ is a standard Brownian motion in $\R$, and $(\mathcal{F}_t)_{t\geq 0}$ is the augmented filtration generated by $W$;
\item[(ii)] $a_0, \,\sigma_a,\, \sigma_b\in \R$;
\item[(iii)] the control strategy $u$ belongs to $\mathcal{U}$ where $$\mathcal{U}:=\left\lbrace z\in \mathcal L^2_{\mathcal{F}}(\Omega\times [0,T], \R):u(t)\in U \;a.s.\right\rbrace $$ where $U$ is a convex subset of $\R$;
\item[(iv)] $d>0$ is the maximum delay the control takes to affect the system;
 \item[(v)] $\mu_a,\, \mu_b$ are finite regular measures in $[-d,0]$ that describe the time that respectively the state and the control take to affect the system.
\end{itemize}
\end{hypothesis}
The objective is to minimize, over all controls in $\mathcal {U}$, the following finite horizon cost:
 \begin{equation}\label{costo-advertisement}
J(t,x,u)=\E \int_t^T \ell\left(s,y(s),u(s)\right)\;ds +\E  \phi(y_T),
\end{equation}
where $\ell$ represents the cost of advertisement, and $-\phi$ represents the final utility, that may depend on the trajectory $y_T=y(T+\theta),\, \theta \in [-d,0].$
We assume that $\ell:[0,T]\times\R\times \R\rightarrow \R$ is continuous, bounded and differentiable with respect to $x$ and $u$, moreover the derivatives with respect to $x$ and $u$ satisfy
\[\abs{\ell_x(t,y,u)}+  \abs{\ell_ul(t,y,u)} \leq C_3(1 + \abs{y} + \abs{u}),  \]
and $\phi$ is given by
\begin{equation}\label{fi}
\phi(y_T)= \bar\phi\left(\int_{-d}^0 y( T+\theta) \mu_\phi(d\theta)\right),
\end{equation}
where $\bar\phi:\R\rightarrow \R$  is Lipschitz continuous and differentiable and $\mu_\phi$ is another finite regular measure on $[-d,0]$.
\newline We consider the adjoint equation for the pair of processes $(p,q)\in \Lcal^2_{\mathcal{F}}(\Omega\times [0,T], \R)\times 
\Lcal^2_{\mathcal{F}}(\Omega\times [0,T], \R)$
 \begin{equation}\label{aggiunta-advertisement}
  \left\lbrace\begin{array}{l}
p(t)= \dis\int_t^T \ell_x\left(s, y(s),u(s)\right)\, ds
+ \dis\int_t^Ta_0p(s)\,ds\\ \qquad+\dis\int_t^T\E^{\Fcal_s}\dis\int_{-d}^0 p(s-\theta)\mu_a(d\theta)\, ds+ \dis\int_t^T\sigma_aq(s)\,ds+\dis\int_t^Tq_sdW_s\\ \qquad+\dis\int_{t\vee(T-d)}^T\textcolor{blue}{\E^{\Fcal_t}}\bar  \phi_x(y_T) \textcolor{blue}{ \mu_\phi^T}(d\theta)\\
p(T-\theta)=0, \; q(T-\theta)=0 \quad \forall \,\theta \in [-d,0 ).
 \end{array}
 \right.
 \end{equation}
\begin{theorem}\label{maxprinc-findim-advertisement}
 Let Hypothesis \ref{ipotesibasic} hold true. Let $(p,q)$ be the unique solution of the ABSDE (\ref{aggiunta-advertisement}). 
Let $(\bar y, \bar u)$ be an optimal pair for the optimal control problem of minimizing the cost functional (\ref{costo-advertisement}) related to the controlled state equation (\ref{eq-contr-rit}). Let  $v$ be another admissible control, set $\bar v= v-\bar u$ and $u^\rho= \bar u+\rho \bar v$, then 
\begin{multline}\label{max-princ:fin:condiz-ham-advertisement}
b_0\left( \bar u(t)- u^\rho(t)\right)p(t)+ \left( \bar u(t)- u^\rho(t)\right)\E^{\mathcal{F}_t} \int_{-d}^0p(t-\theta)\mu_b(d\theta)\\
+\sigma_b\left( \bar u(t)- u^\rho(t)\right)q(t)+\ell_u(t,\bar y(t),\bar u(t))\left( \bar u(t)- u^\rho(t)\right)\leq 0  \quad dt\times \P\; a.s..
\end{multline}
\end{theorem}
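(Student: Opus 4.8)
The plan is to recognize the advertising problem \eqref{eq-contr-rit}--\eqref{fi} as a particular instance of the abstract control problem of Section~\ref{Sec:contr} and then to invoke Theorem~\ref{maxprinc-findim}. First I would fix the dictionary between the two settings. Choosing
\[
\bar f(t,X,U)=X+U,\qquad \bar g(t,X,U)=X+U,\qquad \bar l=\ell,\qquad \bar h=\bar\phi,
\]
and the measures
\[
\mu_1=a_0\delta_0+\mu_a,\quad \mu_2=\sigma_a\delta_0,\quad \mu_3=b_0\delta_0+\mu_b,\quad \mu_4=\sigma_b\delta_0,\quad \mu_5=\mu_6=\delta_0,\quad \mu=\mu_\phi,
\]
where $\delta_0$ is the Dirac mass at $0$, one has $\int_{-d}^0 y(t+\theta)\mu_1(d\theta)=a_0 y(t)+\int_{-d}^0 y(t+\theta)\mu_a(d\theta)$ and likewise for the control and the diffusion, so that $f,g,l,h$ reproduce exactly the drift, diffusion, running cost and final cost of the advertising model. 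All the measures above are finite and strongly regular on $[-d,0]$.

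Next I would verify Hypothesis~\ref{ip:findim}. Here $\bar f_x=\bar f_u=\bar g_x=\bar g_u\equiv 1$, which are bounded; the growth bound $|D_x\ell|,|D_u\ell|\le C_3(1+|y|+|u|)$ supplies the linear growth of $\bar l_x,\bar l_u$; and the Lipschitz continuity of $\bar\phi$ makes $\bar h_x=\bar\phi_x$ bounded, hence of linear growth. Thus Theorem~\ref{maxprinc-findim} applies to this problem.

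It then remains to substitute the dictionary into the abstract adjoint equation and optimality condition. For the adjoint \eqref{ABSDEtrascinata-aggiunta-mu}, the terms weighted by $\delta_0$ collapse to their pointwise values: since $p(s)$, $q(s)$ and $\ell_x(s,\bar y(s),\bar u(s))$ are $\Fcal_s$-measurable, one has $\E^{\Fcal_s}[a_0p(s)]=a_0p(s)$, $\E^{\Fcal_s}[\sigma_aq(s)]=\sigma_aq(s)$ and $\E^{\Fcal_s}\ell_x=\ell_x$, so that \eqref{ABSDEtrascinata-aggiunta-mu} reduces precisely to the ABSDE \eqref{aggiunta-advertisement}. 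For the optimality condition I would insert the same data into the variational form \eqref{max-princ:fin:condiz-ham-var}; splitting $\mu_3=b_0\delta_0+\mu_b$ and $\mu_4=\sigma_b\delta_0$ yields the instantaneous terms in $b_0$ and $\sigma_b$ together with the distributed-delay term against $\mu_b$, while $\mu_6=\delta_0$ produces the pointwise increment $\ell(t,\bar y(t),u^\rho(t))-\ell(t,\bar y(t),\bar u(t))$. Collecting these and fixing the orientation of the inequality via $u^\rho-\bar u=\rho\bar v$ gives \eqref{max-princ:fin:condiz-ham-advertisement}.

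No genuinely new difficulty arises: the statement is a corollary of Theorem~\ref{maxprinc-findim}. The points that require care are purely bookkeeping: correctly encoding the simultaneous instantaneous and distributed dependence on the state and control through the decompositions $\mu_1=a_0\delta_0+\mu_a$ and $\mu_3=b_0\delta_0+\mu_b$, removing the conditional expectations on the adapted pointwise ($\delta_0$) terms, and tracking the sign when passing from the $\ge 0$ form of the abstract condition to the final inequality.
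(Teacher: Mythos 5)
Your proposal is correct and coincides with the paper's (implicit) argument: the paper states Theorem \ref{maxprinc-findim-advertisement} without any separate proof, as a direct specialization of Theorem \ref{maxprinc-findim} obtained through exactly the dictionary you propose, namely $\bar f=\bar g=X+U$, $\mu_1=a_0\delta_0+\mu_a$, $\mu_2=\sigma_a\delta_0$, $\mu_3=b_0\delta_0+\mu_b$, $\mu_4=\sigma_b\delta_0$, $\mu_5=\mu_6=\delta_0$, $\mu=\mu_\phi$, followed by collapsing the conditional expectations on the adapted $\delta_0$-terms. The only caveat is that a consistent substitution into \eqref{max-princ:fin:condiz-ham-var} produces all four increments with the same orientation under the reversed inequality $\leq 0$ (so the last term should read $\ell(t,\bar y(t),\bar u(t))-\ell(t,\bar y(t),u^\rho(t))$); the opposite sign of the running-cost increment in \eqref{max-princ:fin:condiz-ham-advertisement} as printed appears to be a typo in the statement rather than a gap in your argument.
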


\section{An optimal portfolio problem with execution delay}
\label{sec:opt-port}

We consider a generalized Black and Scholes market with one risky asset, whose price at time $t$ is denoted by $S(t)$ and whose past trajectory from time $t-d$ up to time $t$ is denoted by $S_t$, and one non-risky asset, whose price at time $t$ is
denoted by $B(t)$. The result can be extended to the case of a Black and Scholes market with $j$ risky assets, whose prices at time t are denoted by $S^i(t),i = 1, . . . , j$, and one non-risky asset: for the sake  of simplicity we limit here to the case of only one risky asset.
\newline  The evolution of the prices is given by the following stochastic delay differential equation in a complete probability space $(\Omega, \Fcal, \mP)$ : 
\begin{equation}\label{eq:price_evolution}
\left\lbrace
\begin{array}{l}
dS(t)=S(t )\left[ b(t, S_t ) dt+\sigma(t, S_t ) dW_t  \right],\
S(\theta)=\nu_0(\theta),\vspace{0.1cm}\\
dB(t)=r(t, S_t )B(t) dt,\vspace{0.1cm}\\
B(0)=B_0
\end{array}
\right.
\end{equation}
where $W(t)$ is a standard Brownian motion in $\R$, $(\Fcal_t)_{t\geq 0}$ is the filtration generated by $W$ and augmented with null probability sets and $S_t(\theta)=S(t+\theta),\, \theta \in [-d,0]$. 
The drift $b$, the diffusion $\sigma$ and the rate $r$ are given by
\begin{align}\label{eq:b-r-sigma}
&b(t, S_t )=\bar b(t, \int_{-d}^0S(t+\theta )\mu_{\bar b}(d\theta)), 
\quad \sigma(t, S_t )=\bar \sigma(t, \int_{-d}^0S(t+\theta )\mu_{\bar \sigma}(d\theta)) ,\nonumber \\ &  r(t, S_t )=\bar r (t\int_{-d}^0S(t+\theta )\mu_{\bar r}(d\theta))
\end{align}
where $\mu_{\bar b},\,\mu_{\bar \sigma},\,\mu_{\bar r}$ are finite regular measures on $[-d,0]$.

\begin{hypothesis}\label{ip: b_r_sigma}
On $\bar b, \bar \sigma  $ and $\bar r$ we make the following assumptions: 
\begin{itemize}
\item[i)]   $\mu_{\bar b}$ is a regular measure and $\bar b : [0,T]\times \R \rightarrow \R$  is measurable. Moreover $\forall\, s_i\in \R,\,i=1,2$
\[
\vert \bar b(t, s_1) -\bar b(t, s_2)\vert\leq c \vert s_1-s_2\vert\]
for some $c>0$ and for all $t\in[0,T]$ $\bar b(t,\cdot)$ is differentiable;
\item[ii)] $\mu_{\bar \sigma}$ is a regular measure and $\bar\sigma : [0,T]\times \R\rightarrow \R$ is measurable. Moreover $\forall\, s_i\in \R,\,i=1,2$
\[
\vert \bar\sigma(t, s_1) -\bar\sigma(t, s_2)\vert\leq c \vert s_1- s_2\vert
\] for some $c>0$ and for all $t\in[0,T]$ $\bar \sigma(t,\cdot)$ is differentiable;
\item[iii)] $\bar r : [0,T]\times \R\rightarrow \R$ is measurable. Moreover $\forall\, s_i\in \R,\,i=1,2$
\[
\vert r(t, s_1) -r(t, s_2)\vert\leq c \vert s_1- s_2\vert
\]for some $c>0 $ and for all $t\in[0,T]$ $\bar r(t,\cdot)$ is differentiable.
\end{itemize}
\end{hypothesis}
We now consider the evolution of $V(t)$, the value at time $t$ of the associated self-financing portfolio. We consider an optimal portfolio problem with execution delay, which is inspired by the models studied, in a different context, in \cite{BruPh}  in a stochastic impulse control framework, and which is treated also in \cite{FabFed}. Let $d>0$ be a fixed execution delay time:
at time $t>0$ the investor chooses, on the basis of the information contained in $\mathcal F_t$, to allocate the amount
of money $u(t)>0$ of its portfolio in the risky asset. This is the control process. However, due to the execution delay this order will be executed at time $t+d$ when the price of the risky asset has changed, see \cite{BruPh} for the formulation of this problem in a stochastic impulse control framework. Moreover we allow consumption, and also the investors are allowed to take money from the portfolio $V$: in the model this is represented by a further control $c$.
\newline The state equation for the optimal portfolio with execution delay is similar to the one considered in \cite{ElKa1997} in the case without delay, see also \cite{FabFed}, and it is given by
\begin{equation}\label{eq:portfolio}
\left\lbrace\begin{array}{ll}dV(t)= r (t, S_t)(V(t)- \pi^*(t-d)) dt-c(t)dt+\pi^*(t-d)\left[ b(t,S_t ) dt\right.\\ \qquad\,\left.+\sigma(t,S_t ) dW_t )\right]
\\
V(\theta)=\eta(\theta),  \,  \pi(\theta)= \pi_0(\theta),\, \theta\in[-d,0).
\end{array}
\right.
\end{equation}
Here we will only consider square-integrable, predictable investment strategies $\pi\in \mathcal{L}^2_\mathcal{F}(\Omega\times [0, T ] , \R)$.

The aim is to maximize the utility functional over the set of the admissible strategies
\begin{equation}\label{utility}
U(c)={\mathbb E} \int_0^T\left[U_1\left(t,c(t)\right) \right]\,dt +{\mathbb E} \left[U_2\left(\int_{-d}^0\ V( T+\theta) \mu_U(d\theta)\right) \right]=  {\mathbb E} \left[U\left( V_T\right) \right],
\end{equation}
where $U_1:[0,T]\times \R\rightarrow \R$ and  $U_2:\R\rightarrow \R$ are given utility functions, $U_1$ represents the utility from consumption and it is assumed to be continuous, differentiable in the second variable and the derivative with respect to $c$ has linear growth in $c$, and $U_2$ represents the utility from the wealth on $[T-d, T]$ and it is assumed to be Lipschitz continuous and differentiable. Here $\mu_U$ is another finite regular measure on $[-d,0]$: the utility is related  not only to the final value $T$, but to the value of the portfolio in the window $[T-d,T]$, and so it depends on $V(T+\theta),\, \theta \in [-d,0]$.

At any time $t\in [-d,T]$, the state $X(t)\in \R^2 $ is given by the pair
\[
X(t)=\left(
\begin{array}{l}
S(t)\\
V(t)
\end{array}
\right).
\]
So the equation for $X$ is given by
\begin{equation}\label{eq:pair_X}
\left\lbrace
\begin{array}{l}
d\left(
\begin{array}{l}
S(t)\\
V(t)
\end{array}
\right)=\left(\begin{array}{l}S(t ) b(t,S_t )\vspace{2mm}\\
 r (t, S_t)(V(t)- \pi(t-d))-c(t)+\pi(t-d) b(t,  S_t )\end{array}\right) dt\vspace{3mm}\\\qquad\,
 +\left(\begin{array}{l}S(t)\sigma(t, S_t ) \vspace{2mm}\\
\pi(t-d) \sigma(t,  S_t )
\end{array}\right)dW_t \vspace{3mm}\\
\left(
\begin{array}{l}
S(\theta)\\
V(\theta)
\end{array}
\right)=\left(\begin{array}{l}
\nu_0(\theta)\\
\eta(\theta)
\end{array}
\right)
\end{array}
\right.
\end{equation}
and it turns out to be an equation with delay both in the state and in the control. 
\newline Notice that the adjoint processes are given by a pair of processes $$(p,q)=\left(\left(\begin{array}{l}p^1\\p^2\end{array}\right),\left(\begin{array}{l}q^1\\q^2\end{array}\right)\right)\in \Lcal^2_{\mathcal{F}}(\Omega\times [0,T], \R^2)\times 
\Lcal^2_{\mathcal{F}}(\Omega\times [0,T], \R^2)$$ solution of the ABSDEs we are going to write, and that it turns out that the pair $(p^1,q^1)$ is identically $0$. Indeed
 \begin{equation}\label{ABSDEtrascinata-aggiunta_execution delay1}
 \left\lbrace\begin{array}{l}
p^1(t)=\dis\int_t^Tp^1(s)b(s,S_s)\,ds+\dis\int_t^Tq^1(s)\sigma(s,S_s)\,ds+\dis\int_t^Tq^1(s)dW_s 
\\ \qquad+\dis\int_t^T\E^{\Fcal_s}\dis\int_{-d}^0 p^1(s-\theta) S(s-\theta) \bar b_x(s-\theta, S_{s-\theta})\mu_{\bar b}(d\theta)\, ds\\
\qquad+\dis\int_t^T\E^{\Fcal_s}\dis\int_{-d}^0 q^1(s-\theta) S(s-\theta) \bar \sigma_x(s-\theta, S_{s-\theta})\mu_{\bar \sigma}(d\theta)\, ds\\ 
\qquad\\
p(T-\theta)=0, \; q(T-\theta)=0 \quad \forall \,\theta \in [-d,0 ].
 \end{array}
 \right.
\end{equation}
The pair of processes $(p^2,q^2)\in \Lcal^2_{\mathcal{F}}(\Omega\times [0,T], \R)\times 
\Lcal^2_{\mathcal{F}}(\Omega\times [0,T], \R)$ satisfies the following equation:
 \begin{equation}\label{ABSDEtrascinata-aggiunta_execution delay2} \left\lbrace\begin{array}{l}
p^2(t)= \\ \dis\int_t^T\E^{\Fcal_s}\dis\int_{-d}^0 p^2(s-\theta) \left(V(s-\theta)-\pi(s-\theta-d)\right) \bar r_x(s-\theta, S_{s-\theta})\mu_{\bar r}(d\theta)\, ds\\
+\dis\int_t^T\E^{\Fcal_s}\dis\int_{-d}^0 p^2(s-\theta)\pi(s-\theta-d) \bar b_x(s-\theta, S_{s-\theta})\mu_{\bar b}(d\theta)\, ds\\
+\dis\int_t^T\E^{\Fcal_s}\dis\int_{-d}^0 q^2(s-\theta) \pi(s-\theta-d) \bar \sigma_x(s-\theta, S_{s-\theta})\mu_{\bar \sigma}(d\theta)\, ds\\
 +\dis\int_t^Tq^2(s)dW_s+\dis\int_{t\vee(T-d)}^T \textcolor{blue}{\E^{\Fcal_t}} U_x\left(V_ T\right)  \textcolor{blue}{ \mu_U^T}(d\theta)\\
p^2(T-\theta)=0, \; q^2(T-\theta)=0 \quad \forall \,\theta \in [-d,0 ).
 \end{array}
 \right.
\end{equation}
From the maximum principle stated in Theorem \ref{maxprinc-findim} we deduce the following condition on the optimal strategy for the present problem: notice that the optimality condition can be given only in terms of the pair of processes $(p^2,q^2)$.
\begin{theorem}\label{maxprinc-findim_executiondelay}
 Let Hypothesis \ref{ip:findim} holds true. Let $(p^2,q^2)$ be the unique solution of the ABSDE (\ref{ABSDEtrascinata-aggiunta_execution delay2})
Let $(\bar X, \bar \pi, \bar c)$ be an optimal pair for the optimal control problem of minimizing $-U$, where $U$ is defined in (\ref{utility}).
For every admissible control $(\pi_1,c_1)$  set $\bar \pi_1= \pi_1-\bar \pi,\, \bar c_1=c_1-\bar c $ and $\pi^\rho= \bar \pi+\rho \bar \pi_1,\,c^\rho= \bar c+\rho \bar c_1$, then  
\begin{multline}\label{max-princ:fin:condiz-ham-port}
\mathbb{E}^{\mathcal{F}_t}\left[\left(r (t+d, \bar S_{t+d})\left(\pi^\rho(t)- \bar\pi(t)\right) +c^\rho(t+d)-\bar c(t+d)\right.\right.\\ \nonumber
\left.\left.+\left(\bar \pi(t)- \pi^\rho(t)\right)  b(t+d,\bar S_{t+d} )\right)p^2(t+d)\right.\\ \nonumber\left.+\left(\bar\pi(t)-\pi^\rho(t)\right)\sigma(t+d,\bar S_{t+d}) q^2(t+d)\right] +\left(c^\rho(t)-\bar c(t)\right)(U_1)_c(t, \bar c(t)) \leq 0
\end{multline}
$dt\times \P\; a.s.,$ where
we have set $\bar X(t)=\left(
\begin{array}{l}
\bar S(t)\\
\bar V(t)
\end{array}
\right)$.
\end{theorem}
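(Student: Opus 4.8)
The strategy is to read Theorem \ref{maxprinc-findim_executiondelay} off as a specialization of the abstract maximum principle in Theorem \ref{maxprinc-findim}. I would view the pair $X=(S,V)^{\top}\in\R^2$ as the state of \eqref{eq:pair_X}, the pair $(\pi,c)$ as the control, the running cost as identically zero, and the terminal cost as $h(X_T)=-U(V_T)=-\bar U\big(\int_{-d}^0 V(T+\theta)\mu_U(d\theta)\big)$, so that minimizing $-U$ is exactly the problem of Section \ref{Sec:contr}. The coefficients of \eqref{eq:pair_X} fit Hypothesis \ref{ip:findim} once one allows the vector valued strongly regular measures of Remark \ref{rm-vectorvalued}: the execution delay is encoded by a Dirac mass $\delta_{-d}$ in the control measures $\mu_3,\mu_4$ attached to the investment $\pi(t-d)$, while the consumption $c(t)$ enters through $\delta_0$; the regularity in Hypothesis \ref{ip: b_r_sigma} supplies the differentiability and boundedness required by Theorem \ref{maxprinc-findim}.

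Next I would specialize the adjoint ABSDE \eqref{ABSDEtrascinata-aggiunta-mu} to this data, obtaining the system \eqref{ABSDEtrascinata-aggiunta_execution delay1}--\eqref{ABSDEtrascinata-aggiunta_execution delay2} for $(p,q)=((p^1,p^2),(q^1,q^2))$. Two structural observations drive the simplification. First, the control $(\pi,c)$ appears only in the $V$-component of the drift and diffusion (in $f^2$ and $g^2$), never in the $S$-component; hence $\bar f^1_u=\bar g^1_u=0$ and only the second adjoint pair $(p^2,q^2)$ can survive in the optimality condition, independently of anything else. Second, equation \eqref{ABSDEtrascinata-aggiunta_execution delay1} for $(p^1,q^1)$ carries neither a running source (the cost has no $l$-term) nor a terminal datum (the terminal cost $-U(V_T)$ does not depend on $S$), so under the measure conventions used it is a homogeneous linear ABSDE; by the uniqueness part of Theorem \ref{proposition:ABSDEtrascinata} it has only the trivial solution $(p^1,q^1)\equiv 0$. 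Thus the whole maximum principle collapses onto $(p^2,q^2)$ solving \eqref{ABSDEtrascinata-aggiunta_execution delay2}.

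It then remains to evaluate the variational inequality \eqref{max-princ:fin:condiz-ham-var} on the present coefficients. Since $\mu_3=\mu_4=\delta_{-d}$ for the investment, the inner integral $\int_{-d}^0(\cdots)\mu_3(d\theta)$ reduces to its value at $\theta=-d$, which evaluates the drift, the diffusion, the adjoints $p^2,q^2$ and the coefficients $r,b,\sigma$ at the effect time $s=t+d$, while the control perturbation $\pi^\rho(t)-\bar\pi(t)$ stays at time $t$; the consumption channel, entering through $\delta_0$, is re-indexed to this same effect time $t+d$ so that it combines with the investment channel. Inserting the control variations
\[
f^2(\pi^\rho,c^\rho)-f^2(\bar\pi,\bar c)=r\,(\bar\pi(t)-\pi^\rho(t))-(c^\rho-\bar c)+(\pi^\rho(t)-\bar\pi(t))\,b,
\]
\[
g^2(\pi^\rho)-g^2(\bar\pi)=(\pi^\rho(t)-\bar\pi(t))\,\sigma,
\]
and reversing the overall sign (because we minimize $-U$, which turns the $\ge 0$ of \eqref{max-princ:fin:condiz-ham-var} into the $\le 0$ of the statement) produces exactly \eqref{max-princ:fin:condiz-ham-port}, with the conditioning $\E^{\mathcal{F}_t}$ retained throughout.

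The part I expect to be most delicate is not analytic but the bookkeeping of the time shift: one must keep the Dirac mass $\delta_{-d}$ from collapsing onto the wrong argument, so that every surviving coefficient and adjoint value lands at $t+d$ while the perturbed investment remains at $t$, and one must justify re-indexing the undelayed consumption channel to the same effect time and carrying the conditional expectation $\E^{\mathcal{F}_t}$ through this change of variable. Verifying that $(p^1,q^1)\equiv 0$ — equivalently, that the first adjoint component genuinely decouples under the chosen measure conventions — is the other point that I would argue with care rather than merely assert, although it is in fact inessential for the final inequality, which already sees only $(p^2,q^2)$ because the control acts solely on $V$.
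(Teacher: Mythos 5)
Your proposal is correct and follows essentially the same route as the paper, which states Theorem \ref{maxprinc-findim_executiondelay} as a direct specialization of Theorem \ref{maxprinc-findim} to the state $(S,V)$ with control $(\pi,c)$, zero running cost and terminal cost $-U(V_T)$, with the execution delay encoded by a Dirac mass at $-d$ in the control measures and with the observation that $(p^1,q^1)\equiv 0$ so that only \eqref{ABSDEtrascinata-aggiunta_execution delay2} enters the optimality condition. Your bookkeeping of the time shift to the effect time $t+d$, of the sign reversal coming from minimizing $-U$, and of the homogeneity argument for $(p^1,q^1)$ matches what the paper leaves implicit.
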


\bibliography{biblio}

\begin{thebibliography}{10}

\bibitem{BanCosFuhPha}
E.~Bandini, A.~Cosso, M.~Fuhrman, and H.~Pham.
\newblock Backward {SDE}s for optimal control of partially observed path-dependent stochastic systems: a control randomization approach.
\newblock {\em Ann. Appl. Probab.}, 28(3):1634--1678, 2018.

\bibitem{BruPh}
B.~Bruder and H.~Pham.
\newblock Impulse control problem on finite horizon with execution delay.
\newblock {\em Stochastic Process. Appl.}, 119(5):1436--1469, 2009.

\bibitem{BuckEngeRasc2005}
R.~Buckdahn, H.-J. Engelbert, and A.~R\u{a}\c{s}canu.
\newblock On weak solutions of backward stochastic differential equations.
\newblock {\em Teor. Veroyatn. Primen.}, 49(1):70--108, 2004.

\bibitem{ChenWuAutomatica2010}
L.~Chen and Z.~Wu.
\newblock Maximum principle for the stochastic optimal control problem with delay and application.
\newblock {\em Automatica J. IFAC}, 46(6):1074--1080, 2010.

\bibitem{ChenWuYu2012}
L.~Chen and Z.~Wu.
\newblock Delayed stochastic linear-quadratic control problem and related applications.
\newblock {\em Journal of Applied Mathematics}, 46:22 pp. doi:10.1155/2012/835319, 2012.

\bibitem{eidelman2004functional}
Yuli Eidelman, Vitali Milman, and Antonis Tsolomitis.
\newblock {\em Functional analysis}, volume~66 of {\em Graduate Studies in Mathematics}.
\newblock American Mathematical Society, Providence, RI, 2004.
\newblock An introduction.

\bibitem{ElKa1997}
N.~El~Karoui, S.~Peng, and M.~C. Quenez.
\newblock Backward stochastic differential equations in finance.
\newblock {\em Math. Finance}, 7(1):1--71, 1997.

\bibitem{FabFed}
G.~Fabbri and S.~Federico.
\newblock On the infinite-dimensional representation of stochastic controlled systems with delayed control in the diffusion term.
\newblock {\em Mathematical Economics Letters}, 2(3-4):33--44, 2014.

\bibitem{FuTessAOP2002}
M.~Fuhrman and G.~Tessitore.
\newblock Nonlinear {K}olmogorov equations in infinite dimensional spaces: the backward stochastic differential equations approach and applications to optimal control.
\newblock {\em Ann. Probab.}, 30(3):1397--1465, 2002.

\bibitem{GozMar}
F.~Gozzi and C.~Marinelli.
\newblock Stochastic optimal control of delay equations arising in advertising models.
\newblock In {\em Stochastic partial differential equations and applications---{VII}}, volume 245 of {\em Lect. Notes Pure Appl. Math.}, pages 133--148. Chapman \& Hall/CRC, Boca Raton, FL, 2006.

\bibitem{GozMarSav}
F.~Gozzi, C.~Marinelli, and S.~Savin.
\newblock On controlled linear diffusions with delay in a model of optimal advertising under uncertainty with memory effects.
\newblock {\em J. Optim. Theory Appl.}, 142(2):291--321, 2009.

\bibitem{GozMas}
F.~Gozzi and F.~Masiero.
\newblock Stochastic optimal control with delay in the control {II}: {V}erification theorem and optimal feedbacks.
\newblock {\em SIAM J. Control Optim.}, 55(5):3013--3038, 2017.

\bibitem{GrossVisc}
L.~Grosset and B.~Viscolani.
\newblock Advertising for a new product introduction: a stochastic approach.
\newblock {\em Top}, 12(1):149--167, 2004.

\bibitem{GMO}
G.~Guatteri, F.~Masiero, and C.~Orrieri.
\newblock Stochastic maximum principle for {SPDE}s with delay.
\newblock {\em Stochastic Process. Appl.}, 127(7):2396--2427, 2017.

\bibitem{Hartl}
R.~F. Hartl.
\newblock Optimal dynamic advertising policies for hereditary processes.
\newblock {\em J. Optim. Theory Appl.}, 43(1):51--72, 1984.

\bibitem{hu1990maximum}
Y.~Hu and S.~Peng.
\newblock Maximum principle for semilinear stochastic evolution control systems.
\newblock {\em Stochastics Stochastics Rep.}, 33(3-4):159--180, 1990.

\bibitem{hu1996maximum}
Y.~Hu and S.~Peng.
\newblock Maximum principle for optimal control of stochastic system of functional type.
\newblock {\em Stochastic Anal. Appl.}, 14(3):283--301, 1996.

\bibitem{mohammed1998stochastic}
S.~E.~A. Mohammed.
\newblock Stochastic differential systems with memory: theory, examples and applications.
\newblock In {\em Stochastic analysis and related topics, {VI} ({G}eilo, 1996)}, volume~42 of {\em Progr. Probab.}, pages 1--77. Birkh\"auser Boston, Boston, MA, 1998.

\bibitem{oksendal2011optimal}
B.~{\O}ksendal, A.~Sulem, and T.~Zhang.
\newblock Optimal control of stochastic delay equations and time-advanced backward stochastic differential equations.
\newblock {\em Adv. in Appl. Probab.}, 43(2):572--596, 2011.

\bibitem{OrrRocSca2020}
C.~Orrieri, E.~Rocca, and L.~Scarpa.
\newblock Optimal control of stochastic phase-field models related to tumor growth.
\newblock {\em ESAIM Control Optim. Calc. Var.,}, forthcoming.

\bibitem{ParRasbook}
E.~Pardoux and A.~R\u{a}\c{s}canu.
\newblock {\em Stochastic differential equations, backward {SDE}s, partial differential equations}, volume~69 of {\em Stochastic Modelling and Applied Probability}.
\newblock Springer, Cham, 2014.

\bibitem{PengYang}
S.~Peng and Z.~Yang.
\newblock Anticipated backward stochastic differential equations.
\newblock {\em Ann. Probab.}, 37(3):877--902, 2009.

\bibitem{ElliottYang}
Z.~Yang and R.~J. Elliott.
\newblock Some properties of generalized anticipated backward stochastic differential equations.
\newblock {\em Electron. Commun. Probab.}, 18:no. 63, 10, 2013.

\bibitem{Yo99}
J.~Yong and X.~Y. Zhou.
\newblock {\em Stochastic Controls: Hamiltonian Systems and \\emph{HJB} Equations}.
\newblock New York: Springer-Verlag, 1999.

\end{thebibliography}
 \bibliographystyle{plain}

 \end{document}